\def \Q {{\mathbb Q}}
\def \Z {{\mathbb Z}}
\def \M {{\mathbb M}}
\def \J  {{\mathbf J}}
\def \al {{\alpha}}
\def \be {{\beta}}
\def \ga {{\gamma}}
\def \Ga {{\Gamma}}
\def \ze {\zeta}
\newtheorem{theorem}{Theorem}
\newtheorem{cor}[theorem]{Corollary}
\newtheorem{lemma}[theorem]{Lemma}
\newtheorem{definition}[theorem]{Definition}
\newtheorem{rem}[theorem]{Remark}
\title{Non-singular circulant graphs and digraphs }
\author{ A. K. Lal\footnote{Department of Mathematics
$\&$ Statistics, Indian Institute of Technology Kanpur, Kanpur - 208
016, India. \newline E-Mails: arlal@iitk.ac.in; satya@iitk.ac.in.} \and A Satyanarayana Reddy$^*$ }
\date{}
\begin{document}
\maketitle

{\bf {Abstract }}
We give necessary and sufficient conditions for a few classes of known circulant graphs and/or digraphs to be singular. The above graph classes are generalized to $(r,s,t)$-digraphs for non-negative integers $r,s$ and $t$, and the digraph $C_n^{i,j,k,l}$, with certain restrictions. We also obtain a necessary and sufficient condition for the digraphs $C_n^{i,j,k,l}$ to be singular. Some necessary conditions are given under which the $(r,s,t)$-digraphs  are singular.

{\bf Keywords:} Graphs, Digraphs, Circulant matrices, Primitive roots.

\section{Introduction and preliminaries}

Let $\Q$ denote the set of rational numbers.  Then the set
of all $n \times n$ matrices with entries from $\Q$ is denoted by
$\M_n(\Q)$. A matrix $A \in \M_n(\Q)$  is said to be symmetric if $A = A^t$, where $A^t$ denotes the transpose of the matrix $A$ and is said to be circulant if
$a_{ij}=a_{{1,\;j-i+1}}$,  whenever $2 \le i \le n$ and $1 \le j \le n$, where the subscripts are read modulo $n$.
From the definition, it is clear that if $A$ is circulant then for each $i\geq 2$ the elements of the $i$-th row are obtained by cyclically shifting the elements of
the $(i-1)$-th row one position to the  right. So it is sufficient
to specify its first row. For example, the identity matrix, denoted  $I$, and the matrix of all $1$'s, denoted $\J$, are circulant matrices. Let $W_n$ be a
circulant matrix of order $n$ with $[0, 1, 0, \ldots,
0]$ as its first row . Then the following result of Davis~\cite{davis} establishes that every circulant matrix of order $n$ is a polynomial in $W_n$.

\begin{lemma}\cite{davis}\label{lem:cir}
Let $A \in \M_n(\Q)$. Then $A$ is circulant if and only if it is a polynomial over $\Q$ in $W_n$.
\end{lemma}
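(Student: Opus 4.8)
The plan is to exploit that $W_n$ is the cyclic-shift permutation matrix, so that its powers $I = W_n^0, W_n, \ldots, W_n^{n-1}$ form a natural spanning set for the circulant matrices. First I would record the entries of the powers of $W_n$. Since $W_n$ is circulant with first row $[0,1,0,\ldots,0]$, the defining relation gives $(W_n)_{ij} = (W_n)_{1,\,j-i+1} = 1$ exactly when $j \equiv i+1 \pmod n$. An easy induction on $k$ then yields $(W_n^k)_{ij} = 1$ precisely when $j \equiv i+k \pmod n$, and in particular $W_n^n = I$. Consequently each $W_n^k$ is itself circulant, with first row equal to the standard basis vector $\e_{k+1}$ (indices read modulo $n$).

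For the forward direction, suppose $A = p(W_n)$ for some polynomial $p$ over $\Q$. Using $W_n^n = I$ to reduce exponents, I may write $A = c_0 I + c_1 W_n + \cdots + c_{n-1} W_n^{n-1}$ with each $c_k \in \Q$. The defining relation of a circulant matrix, $a_{ij} = a_{1,\,j-i+1}$, is a system of linear conditions on the entries, so the set of circulant matrices is a $\Q$-subspace of $\M_n(\Q)$. Since each $W_n^k$ lies in this subspace by the previous step, so does their $\Q$-linear combination $A$; hence $A$ is circulant.

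For the converse, let $A$ be circulant with first row $[a_1, a_2, \ldots, a_n]$, and set $B = \sum_{k=0}^{n-1} a_{k+1} W_n^k$, that is, $B = p(W_n)$ for $p(x) = \sum_{k=0}^{n-1} a_{k+1} x^k$ over $\Q$. By the entry computation the first row of $W_n^k$ is $\e_{k+1}$, so the first row of $B$ is $\sum_{k=0}^{n-1} a_{k+1} \e_{k+1} = [a_1, \ldots, a_n]$, which agrees with the first row of $A$. Both $A$ and $B$ are circulant (the latter by the forward direction just established), and a circulant matrix is completely determined by its first row; therefore $A = B = p(W_n)$, as required.

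The argument is essentially structural, and I do not expect a genuine obstacle. The only point that demands care is the modular bookkeeping when identifying the entries of $W_n^k$ and reading off its first row, since the index $j - i + k$ must be interpreted consistently modulo $n$ for the identification of the first row with $\e_{k+1}$ to be exact.
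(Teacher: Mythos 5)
Your argument is correct and complete. The paper itself gives no proof of this lemma --- it is quoted from Davis's book --- so there is nothing internal to compare against; your proof is the canonical one: the entry computation $(W_n^k)_{ij}=1$ iff $j\equiv i+k \pmod n$ shows each power $W_n^k$ is circulant with first row $\e_{k+1}$, the circulant condition is linear so the forward direction follows, and the converse follows because a circulant matrix is determined by its first row. The one point you flag --- consistent modular bookkeeping with the paper's convention $a_{ij}=a_{1,\,j-i+1}$ --- is indeed the only place where care is needed, and you handle it correctly.
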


Let $A \in M_n(\Q)$ be a circulant matrix. Then Lemma~\ref{lem:cir}
ensures the existence of a polynomial $\ga_A(x) \in \Q[x]$ such that
$A=\ga_A(W_n)$. We call $\ga_A(x)$, the representer polynomial of $A$. For a fixed positive integer $n$, let $\ze_n$ denote a primitive $n$-th root of unity. That is, $\ze_n^n = 1$ and $\ze_n^k \ne 1$ for $k=1, 2, \ldots, n-1$. Then the following result about circulant matrices is well known.

\begin{lemma}\label{lem:circu}
Let $A \in M_n(\Q)$ be a circulant matrix with $[a_0, a_1,\ldots,
a_{n-1}]$ as its first row. Then
\begin{enumerate}
\item  $\ga_A(x)= a_0+a_1x+\cdots +a_{n-1}x^{n-1} \in \Q[x]$.
\item the eigenvalues of $A$ are given by $\ga_A(\ze_n^k)$, for $ k = 0, 1,
\ldots, n-1$.
\end{enumerate}
\end{lemma}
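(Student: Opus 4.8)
The plan is to establish (1) by directly exhibiting the representer polynomial, and then to deduce (2) from the eigenstructure of $W_n$ via the spectral mapping principle for a polynomial of a matrix.

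First I would analyse the powers of $W_n$. By definition the first row of $W_n$ is $[0,1,0,\ldots,0]$ and each row is the cyclic right-shift of the previous one, so $W_n$ is the cyclic permutation matrix. An easy induction shows that, for $k=0,1,\ldots,n-1$, the matrix $W_n^k$ is circulant with first row $[0,\ldots,0,1,0,\ldots,0]$ having its single $1$ in position $k+1$, and that $W_n^n=I$. Now let $A$ be circulant with first row $[a_0,a_1,\ldots,a_{n-1}]$. The first row of $\sum_{k=0}^{n-1}a_k W_n^k$ is then $[a_0,a_1,\ldots,a_{n-1}]$, which agrees with the first row of $A$; since a circulant matrix is completely determined by its first row, this forces $A=\sum_{k=0}^{n-1}a_k W_n^k=\ga_A(W_n)$ with $\ga_A(x)=a_0+a_1x+\cdots+a_{n-1}x^{n-1}$, proving (1).

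For (2) I would first find the spectrum of $W_n$. Since $W_n$ acts on a vector $v$ by the cyclic shift $(W_n v)_i=v_{i+1}$ (subscripts read modulo $n$), the vector $v_k=[1,\ze_n^k,\ze_n^{2k},\ldots,\ze_n^{(n-1)k}]^t$ satisfies $W_n v_k=\ze_n^k v_k$ for each $k=0,1,\ldots,n-1$. Because $\ze_n$ is a primitive $n$-th root of unity, the eigenvalues $\ze_n^0,\ze_n^1,\ldots,\ze_n^{n-1}$ are pairwise distinct, so the eigenvectors $v_0,\ldots,v_{n-1}$ are linearly independent (indeed they are the columns of a nonsingular Vandermonde matrix) and form a basis.

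Finally I would apply the spectral mapping principle. From $A=\ga_A(W_n)$ and $W_n v_k=\ze_n^k v_k$ we obtain $A v_k=\ga_A(W_n)v_k=\ga_A(\ze_n^k)v_k$, so each $v_k$ is an eigenvector of $A$ with eigenvalue $\ga_A(\ze_n^k)$. Since $\{v_0,\ldots,v_{n-1}\}$ is a basis, the $n$ numbers $\ga_A(\ze_n^k)$, $k=0,\ldots,n-1$, are exactly the eigenvalues of $A$ counted with multiplicity, which proves (2). None of the steps is genuinely hard; the only point demanding care is the index bookkeeping---correctly locating the $1$ in the first row of $W_n^k$ and verifying the shift action $(W_n v)_i=v_{i+1}$---since an off-by-one slip there would propagate through both parts of the argument.
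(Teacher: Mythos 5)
Your proof is correct and is the standard argument: the paper states this lemma without proof as a well-known fact (it is essentially Lemma 3.2 combined with the diagonalization of circulants in Davis~\cite{davis}), and your route---identifying $A=\ga_A(W_n)$ by matching first rows, diagonalizing $W_n$ via the vectors $v_k=[1,\ze_n^k,\ldots,\ze_n^{(n-1)k}]^t$, and applying spectral mapping---is exactly the expected one. The index bookkeeping you flag is handled correctly, and the Vandermonde argument properly accounts for multiplicities, so there is nothing to add.
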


For definitions and results related to linear algebra, algebra and/or graph theory  that have been used in this paper but not have been cleared defined or stated, the readers are advised to see any standard textbook on abstract algebra and/or graph theory(for example, see~\cite{D:F} and/or~\cite{bondy}).

Recall that a  {\em directed graph} (in short, digraph) is an ordered pair $X=(V,E)$ that consists of two sets
$V$, the vertex set, and $E$, the edge set, where $V$ is a non-empty set and  $E \subset V \times V$. If $e=(u, v) \in E$ with $u \ne v$ then the edge $e$ is
said to be {\em incident} from  $u$ to $v$. A digraph is called a  {\em graph} if $(u,v) \in E$ whenever $(v,u) \in E$, for any two elements $u,v \in V$.  An edge between $u$ and $v$ in the graph $X$ is denoted by $\{u,v\}$. A graph/digraph is said to be finite, if  $|V| $ (called the {\em order} of $X$) and $|E|$ (called the {\em size} of $X$) are finite. All the graphs/digraphs in this paper are finite. The adjacency matrix of a graph/digraph $X = (V,E)$ is a $|V| \times |V|$ matrix, denoted $A(X) = [a_{uv}]$, with $a_{uv} = 1$ if $(u,v) \in E$ and $0$, otherwise. Observe that, whenever $X$ is a graph the matrix $A(X)$ is symmetric.
For example, if $A$ denotes the adjacency matrix of the cycle graph $C_n$ on $n$
vertices, then $A$ is a circulant matrix and $\ga_{A}(x)=x+x^{n-1}$ is its
representer polynomial. Therefore, for $r=0,1,\ldots,
n-1$, the eigenvalues of $C_n$ are given by $\lambda_r=2 \cos(\frac{2\pi r}{n})$.
Throughout this paper, we assume that the greatest common
divisor, in short $\gcd$, of all the non-zero coefficients of $\ga_A(x)$ is $1$.
It is well known that $x^n-1=\prod\limits_{d \mid n}\Phi_d(x)$ (here $a \mid b$ means $a$ `divides' $b$), where $\Phi_d(x)=\prod\limits_{\substack{\gcd(k,d)=1\\ 1 \le k \le d}}(x-\zeta_d^k)\in\Z[x]$  is called the $d$-th cyclotomic polynomial. The polynomial $\Phi_n(x)$, for each positive integer $n$, is a monic  irreducible polynomial over $\Q$ and hence  the minimal polynomial of $\ze_n$. Also,   $\deg(\Phi_n(x)) = \varphi(n)$, the well known Euler-totient function. Therefore, using the property of minimal polynomials, it follows
that  if $f(\ze_n) = 0$ for some $f(x) \in \Z[x]$ then $\Phi_n(x)$ divides $f(x) \in \Z[x]$. Or equivalently, $f(\ze_n) = 0$ for some $f(x) \in \Z[x]$ if and only if there exists a polynomial $g(x) \in \Z[x]$ such that $f(x) = \Phi_n(x)
g(x)$.  The next result appears on page $93$ in~\cite{V:P}.

\begin{lemma}\cite{V:P} \label{lem:cyclo}
Let $p$ be a prime number and let $n$ be a positive integer. Then
$$\Phi_{pn}(x)=
\begin{cases} \Phi_n(x^p), & \mbox{ if } p\mid n,\\
\frac{\Phi_n(x^p)}{\Phi_n(x)}, & \mbox{ if } p\nmid n.
\end{cases}$$
In particular, $\Phi_{p^k}(x)=1+x^{p^{k-1}}+x^{2p^{k-1}}+\dots
+x^{(p-1)p^{k-1}}$ for every positive integer $k$.
\end{lemma}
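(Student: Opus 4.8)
The plan is to treat each cyclotomic polynomial $\Phi_m(x)$ as the minimal polynomial of $\ze_m$ over $\Q$, exploiting the three facts recalled in the preliminaries: $\Phi_m$ is monic and irreducible in $\Z[x]$, its roots are precisely the primitive $m$-th roots of unity, and $\deg\Phi_m=\varphi(m)$. The single computation that drives both cases is the order of $\ze_{pn}^{\,p}$: since $\ze_{pn}$ has order $pn$, the element $\ze_{pn}^{\,p}$ has order $pn/\gcd(p,pn)=n$, so it is a primitive $n$-th root of unity and $\Phi_n(\ze_{pn}^{\,p})=0$. Hence $\ze_{pn}$ is a root of $f(x)=\Phi_n(x^p)\in\Z[x]$, and since $\Phi_{pn}(x)$ is the minimal polynomial of $\ze_{pn}$, the divisibility criterion recorded above gives $\Phi_{pn}(x)\mid\Phi_n(x^p)$ in $\Z[x]$. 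Everything then reduces to degree bookkeeping, both polynomials being monic.

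If $p\mid n$, then $\varphi(pn)=p\varphi(n)$, so $\deg\Phi_{pn}=p\varphi(n)=\deg\Phi_n(x^p)$. Two monic polynomials, one dividing the other and of equal degree, must coincide, giving $\Phi_{pn}(x)=\Phi_n(x^p)$.

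The case $p\nmid n$ is the one I expect to require the extra idea, because now $\deg\Phi_n(x^p)=p\varphi(n)$ strictly exceeds $\deg\Phi_{pn}=\varphi(pn)=(p-1)\varphi(n)$, so $\Phi_{pn}$ cannot fill out $\Phi_n(x^p)$ by itself. Here I would additionally show $\Phi_n(x)\mid\Phi_n(x^p)$: if $\ze$ is any primitive $n$-th root of unity then, because $\gcd(p,n)=1$, the power $\ze^{\,p}$ again has order $n$, so $\Phi_n(\ze^{\,p})=0$ and every root of $\Phi_n(x)$ is a root of $\Phi_n(x^p)$. Since $\Phi_n$ and $\Phi_{pn}$ are distinct monic irreducibles, their roots (primitive $n$-th versus primitive $pn$-th roots of unity) are disjoint, so they are coprime and their product divides $\Phi_n(x^p)$. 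A degree count, $\varphi(n)+(p-1)\varphi(n)=p\varphi(n)=\deg\Phi_n(x^p)$, together with monicity forces $\Phi_n(x^p)=\Phi_n(x)\,\Phi_{pn}(x)$, which rearranges to the claimed quotient.

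Finally, for the ``in particular'' formula I would induct on $k$. The base case $k=1$ is the $p\nmid n$ formula with $n=1$, namely $\Phi_p(x)=(x^p-1)/(x-1)=1+x+\cdots+x^{p-1}$. For $k\ge 2$ one has $p\mid p^{k-1}$, so the first case applies and $\Phi_{p^k}(x)=\Phi_{p^{k-1}}(x^p)$; substituting the inductive expression for $\Phi_{p^{k-1}}(x)$ and replacing $x$ by $x^p$ multiplies each exponent by $p$, yielding $1+x^{p^{k-1}}+\cdots+x^{(p-1)p^{k-1}}$. The only genuinely delicate point in the whole argument is the coprimality-plus-degree step in the $p\nmid n$ case; the remainder is the order computation for $\ze_{pn}^{\,p}$ and routine totient arithmetic.
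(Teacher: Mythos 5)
Your argument is correct and complete: the order computation for $\ze_{pn}^{\,p}$ gives $\Phi_{pn}(x)\mid\Phi_n(x^p)$, the totient identities $\varphi(pn)=p\varphi(n)$ (when $p\mid n$) and $\varphi(pn)=(p-1)\varphi(n)$ (when $p\nmid n$) make the degree counts work, and the coprimality of the distinct monic irreducibles $\Phi_n$ and $\Phi_{pn}$ justifies multiplying the two divisibilities in the second case. The paper itself offers no proof of this lemma --- it is quoted from Prasolov's book --- so there is nothing to compare against; your proof is the standard minimal-polynomial-plus-degree-count argument and can stand as a self-contained justification of the cited result.
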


 The following result is an application of Lemma~\ref{lem:circu}. This result also appears in the work of Geller, Kra, Popescu $\&$ Simanca~\cite{gel}.

\begin{lemma}[Geller, Kra, Popesu $\&$ Simanca~\cite{gel}]\label{lem:singular}
 Let $A \in \M_n(\Q)$ be a circulant matrix with $\ga_A(x)$ as
its representer polynomial. Then the following statements are
equivalent:
\begin{enumerate}
\item The matrix $A$ is singular.
\item $\deg( \gcd( \ga_A(x), x^n-1 ) ) \ge 1$.
\end{enumerate}
\end{lemma}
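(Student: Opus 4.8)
The plan is to use the spectral description of circulant matrices from Lemma~\ref{lem:circu} to translate singularity into a statement about common roots, and then to translate common roots into a statement about the gcd. The whole argument is a chain of equivalences.

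First I would recall that by Lemma~\ref{lem:circu}(2) the eigenvalues of $A$ are exactly $\ga_A(\ze_n^k)$ for $k=0,1,\ldots,n-1$. Since a matrix over $\Q$ is singular precisely when $0$ is one of its eigenvalues, $A$ is singular if and only if $\ga_A(\ze_n^k)=0$ for some $k\in\{0,1,\ldots,n-1\}$. Next I would observe that the numbers $\ze_n^0,\ze_n^1,\ldots,\ze_n^{n-1}$ are precisely the $n$ distinct $n$-th roots of unity, that is, exactly the roots of $x^n-1$ in $\C$; they are distinct and $n$ in number because $x^n-1$ has no repeated roots in characteristic zero, its derivative $nx^{n-1}$ being coprime to it. Consequently the condition ``$\ga_A(\ze_n^k)=0$ for some $k$'' is equivalent to ``$\ga_A(x)$ and $x^n-1$ share a common root in $\C$''.

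It then remains to show that $\ga_A$ and $x^n-1$ have a common complex root if and only if $\deg(\gcd(\ga_A(x),x^n-1))\ge 1$. Here I would use that the gcd of two polynomials is unchanged, up to a nonzero scalar, under field extension, since the Euclidean algorithm uses only field operations and keeps the coefficients in the base field; thus the monic gcd computed over $\Q$ coincides with the monic gcd computed over $\C$. Over the algebraically closed field $\C$, two polynomials have a common root exactly when their gcd is non-constant, i.e. has degree at least $1$. Chaining these equivalences yields the stated equivalence of (1) and (2).

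The main subtlety—more a point to state carefully than a genuine obstacle—is this last step: one must be sure that the degree of the gcd does not depend on whether it is computed over $\Q$ or over $\C$, so that the condition in statement (2), which is phrased entirely over $\Q$, correctly detects common roots that a priori live only in $\C$. Everything else is a direct application of the eigenvalue formula together with the identification of the powers of $\ze_n$ as the full root set of $x^n-1$.
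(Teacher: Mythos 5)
Your proof is correct and follows exactly the route the paper indicates: the paper omits a proof, stating only that the lemma ``is an application of Lemma~\ref{lem:circu},'' and your chain of equivalences (singular $\Leftrightarrow$ some eigenvalue $\ga_A(\ze_n^k)$ vanishes $\Leftrightarrow$ $\ga_A$ and $x^n-1$ share a complex root $\Leftrightarrow$ their gcd over $\Q$ is non-constant) is precisely that application. Your care about the gcd being invariant under extension from $\Q$ to $\C$ is the right point to flag and is handled correctly.
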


Fix a positive integer $n$, two distinct integers $a$ and $b$ and let $s$ and
$t$ be positive integers with $s+t = n$. Suppose $[\; \underbrace{a, a, \ldots,
a}_{s {\mbox{ times}}},\;\underbrace{b, b, \ldots,  b}_{t {\mbox{ times}} } \;]$
is the first row of a circulant matrix $A \in \M_n(\Z)$. Then as a direct corollary of  Lemma~\ref{lem:singular}, one has the following result.
\begin{cor}[Davis~\cite{davis}]\label{cor:ab}
Let $[\; \underbrace{a, a, \ldots,
a}_{s {\mbox{ times}}},\;\underbrace{b, b, \ldots,  b}_{t {\mbox{ times}} } \;]$ be the first row of the circulant matrix $A \in \M_n(\Q)$.  Then
$$\det(A)=\begin{cases} (sa+tb)(a-b)^{n-1}, &
\mbox{if} \;\gcd(s,n)=1,\\
0, & \;\mbox{otherwise}.
\end{cases}$$
\end{cor}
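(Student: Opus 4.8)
The plan is to compute $\det(A)$ as the product of the eigenvalues furnished by Lemma~\ref{lem:circu}. First I would record the representer polynomial
$$\ga_A(x) = a(1 + x + \cdots + x^{s-1}) + b(x^s + x^{s+1} + \cdots + x^{n-1}),$$
and then collapse it into closed form using geometric-series sums. Since $s + t = n$, for $x \ne 1$ one obtains
$$\ga_A(x) = \frac{a(x^s-1) + b(x^n - x^s)}{x-1} = \frac{(a-b)x^s + bx^n - a}{x-1}.$$
The point of this simplification is that the numerator becomes easy to evaluate at the $n$-th roots of unity. By Lemma~\ref{lem:circu}, the eigenvalues of $A$ are $\ga_A(\ze_n^k)$ for $k = 0, 1, \ldots, n-1$, so $\det(A) = \prod_{k=0}^{n-1} \ga_A(\ze_n^k)$.

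Next I would separate the factor $k=0$ from the rest. Evaluating the original polynomial directly at $x=1$ gives $\ga_A(1) = sa + tb$, which will produce the leading factor in the claimed formula. For $k = 1, \ldots, n-1$ we have $\ze_n^{kn} = 1$, so the numerator above reduces to $(a-b)\ze_n^{ks} + b - a = (a-b)(\ze_n^{ks} - 1)$, whence
$$\ga_A(\ze_n^k) = \frac{(a-b)(\ze_n^{ks} - 1)}{\ze_n^k - 1}, \qquad k = 1, \ldots, n-1.$$
Multiplying these together and pulling out the constant $a-b$ from each of the $n-1$ factors yields
$$\det(A) = (sa + tb)\,(a-b)^{n-1} \prod_{k=1}^{n-1} \frac{\ze_n^{ks} - 1}{\ze_n^k - 1}.$$

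It then remains to evaluate the product $P = \prod_{k=1}^{n-1} \frac{\ze_n^{ks} - 1}{\ze_n^k - 1}$, and this is the step I expect to carry the real content of the argument. If $\gcd(s,n) = 1$, then the map $k \mapsto ks \pmod n$ permutes the residues $\{1, 2, \ldots, n-1\}$, so the multiset $\{\ze_n^{ks} : 1 \le k \le n-1\}$ equals $\{\ze_n^k : 1 \le k \le n-1\}$; consequently the numerator and denominator of $P$ agree factor-for-factor and $P = 1$, giving $\det(A) = (sa+tb)(a-b)^{n-1}$. If instead $d = \gcd(s,n) > 1$, then taking $k = n/d$ (which lies in $\{1, \ldots, n-1\}$ since $d \ge 2$) makes $ks = (s/d)n$ a multiple of $n$, so $\ze_n^{ks} = 1$ and that factor in the numerator vanishes; hence $P = 0$ and $\det(A) = 0$. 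The main obstacle is handling this product cleanly: one must justify the permutation claim when $\gcd(s,n)=1$ and, in the singular case, exhibit a vanishing numerator factor while confirming the corresponding denominator factor $\ze_n^{k}-1$ is nonzero (which holds because $1 \le k \le n-1$ forces $\ze_n^k \ne 1$), so the cancellation in $P$ is never indeterminate.
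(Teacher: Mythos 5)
Your proof is correct. Every step checks out: the closed form $\ga_A(x)=\frac{(a-b)x^s+bx^n-a}{x-1}$ for $x\ne 1$ is right, the factor $\ga_A(1)=sa+tb$ is right, the reduction of the remaining eigenvalues to $(a-b)\frac{\zeta_n^{ks}-1}{\zeta_n^k-1}$ is right, and the analysis of $P=\prod_{k=1}^{n-1}\frac{\zeta_n^{ks}-1}{\zeta_n^k-1}$ is handled correctly in both cases: when $\gcd(s,n)=1$ the map $k\mapsto ks \pmod n$ permutes the nonzero residues so $P=1$, and when $d=\gcd(s,n)>1$ the choice $k=n/d$ kills a numerator factor while its denominator factor stays nonzero, so $P=0$.

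The comparison with the paper is a little unusual: the paper gives no proof of this corollary at all. It simply declares the formula to be ``a direct corollary of Lemma~\ref{lem:singular}'' and attributes it to Davis. That is an overstatement on the paper's part, since Lemma~\ref{lem:singular} only characterizes \emph{when} $\det(A)=0$ (via $\deg(\gcd(\ga_A(x),x^n-1))\ge 1$); it cannot by itself produce the explicit value $(sa+tb)(a-b)^{n-1}$ in the non-singular case. Your eigenvalue-product computation is exactly the missing content: the $k=0$ factor gives $sa+tb$, the remaining $n-1$ factors each contribute $a-b$ times a unit-like ratio, and the telescoping of $P$ supplies both branches of the formula in one argument. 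In short, you have proved strictly more than the paper's stated justification supports, and your route (product of eigenvalues, as licensed by Lemma~\ref{lem:circu}) is the natural self-contained one.
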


We now   state a couple of known results that directly follow from Corollary~\ref{cor:ab}.
\begin{lemma}\label{lem:K_n}
The complete graph $K_n$, for $n \ge 2$, is non-singular.
\end{lemma}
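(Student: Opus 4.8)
The plan is to recognize the adjacency matrix $A(K_n)$ as a circulant matrix of the special two-block form already handled in Corollary~\ref{cor:ab}, and then simply read off its determinant. Since the complete graph $K_n$ has an edge joining every pair of distinct vertices and no loops, its adjacency matrix carries $0$'s on the diagonal and $1$'s in every off-diagonal position. This matrix is visibly circulant, with first row $[0, 1, 1, \ldots, 1]$, so Corollary~\ref{cor:ab} applies directly.

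To invoke that result I would match the first row $[0,1,1,\ldots,1]$ against the template $[\,\underbrace{a,\ldots,a}_{s},\underbrace{b,\ldots,b}_{t}\,]$ by setting $a = 0$, $s = 1$ (the lone leading $0$), and $b = 1$, $t = n-1$ (the trailing block of $1$'s), so that $s+t = n$ as required. The key observation that makes the computation trivial is that $\gcd(s,n) = \gcd(1,n) = 1$ holds for every $n$, placing us always in the first branch of the formula. Substituting the parameters then gives
$$\det\bigl(A(K_n)\bigr) = (sa+tb)(a-b)^{n-1} = (n-1)(-1)^{n-1},$$
which is nonzero for every $n \ge 2$, whence $K_n$ is non-singular.

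There is no genuine obstacle here: the entire content of the argument is the correct identification of the parameters $a,b,s,t$ together with the immediate remark that $\gcd(1,n)=1$. The only points deserving a moment's attention are the ordering convention in the first row (the single $0$ must be read as the leading block of equal entries, forcing $s=1$) and the hypothesis $n \ge 2$, which is exactly what guarantees $n-1 \neq 0$ and hence a nonzero determinant. I expect the proof to be a one-line appeal to Corollary~\ref{cor:ab} once these parameters are fixed.
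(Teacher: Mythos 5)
Your proof is correct and follows exactly the paper's approach: identify the first row of $A(K_n)$ as $[0,1,1,\ldots,1]$ and apply Corollary~\ref{cor:ab} with $a=0$, $s=1$, $b=1$, $t=n-1$, noting $\gcd(1,n)=1$. You simply spell out the determinant $(n-1)(-1)^{n-1}$ explicitly, which the paper leaves implicit.
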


\begin{proof}
Let  $A$ be the adjacency matrix of complete graph $K_n$.
 Then $[0,1,1,\ldots,1]$ is the first row of $A$. Hence the result follows from
Corollary~\ref{cor:ab}.
 \end{proof}

As a second application, we consider a
particular  class of circulant matrices that appeared in the work of
Searle~\cite{S:R}. He considered the circulant matrices that have  $[ h_0, h_1, \ldots, h_{k-1}, \underbrace{0, \ldots,
0}_{n-k}]$ as its first row, where  $h_0 \ne 0 $ and $h_{k-1} \ne 0$. The above class of matrices was called a {\em $k$-element circulant
matrix}\index{$k$-element circulant
matrix}. Since we are looking at digraphs, we assume $h_0 = 1 =
h_{k-1}$. With an abuse of notation, the circulant matrix with
$[\underbrace{1, 1, \ldots, 1}_k,\;\underbrace{0, 0, \ldots,
0}_{n-k}]$ as its first row will be called a {\em $k$-element
circulant digraph}.   With this notation, the second application of
Corollary~\ref{cor:ab} is stated below.

\begin{lemma}\label{lem:k}
Let $X$ be a $k$-element circulant digraph on $n$ vertices. Then $X$ is
non-singular if and only if  $\gcd(n,k)=1$.
\end{lemma}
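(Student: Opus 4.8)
The plan is to observe that this is, just like Lemma~\ref{lem:K_n}, a direct instance of Corollary~\ref{cor:ab}. The first row of the $k$-element circulant digraph $X$ is $[\underbrace{1,\ldots,1}_k,\underbrace{0,\ldots,0}_{n-k}]$, which is exactly the block form $[\underbrace{a,\ldots,a}_s,\underbrace{b,\ldots,b}_t]$ appearing in Corollary~\ref{cor:ab}, with the specialization $a=1$, $b=0$, $s=k$ and $t=n-k$. Since $a=1$ and $b=0$ are distinct integers, the hypotheses of that corollary are met (taking $1\le k\le n-1$ so that both $s$ and $t$ are positive), and the whole statement should drop out by substituting these values into the determinant formula.

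Concretely, let $A$ be the adjacency matrix of $X$. When $\gcd(k,n)=1$, Corollary~\ref{cor:ab} gives $\det(A)=(sa+tb)(a-b)^{n-1}=\bigl(k\cdot 1+(n-k)\cdot 0\bigr)(1-0)^{n-1}=k$, which is nonzero because $k$ is a positive integer; hence $X$ is non-singular. When $\gcd(k,n)\neq 1$, the same corollary gives $\det(A)=0$, so $X$ is singular. Combining the two cases yields precisely the claimed equivalence: $X$ is non-singular if and only if $\gcd(n,k)=1$.

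Given this reduction there is essentially no real obstacle; the only points needing a word of care are bookkeeping rather than mathematics. First, one must note that the nonzero determinant value is literally $k\neq 0$, which is guaranteed by the normalization $h_0=h_{k-1}=1$ forcing $k\ge 1$. Second, the boundary case $k=n$ falls outside the corollary's requirement that $t$ be positive, but there $A$ is the all-ones matrix $\J$, which is obviously singular and consistent with $\gcd(n,n)=n>1$ for $n\ge 2$, so it can be disposed of separately in one line. Should one prefer to avoid Corollary~\ref{cor:ab} altogether, an equivalent route is through Lemma~\ref{lem:singular}: here $\ga_A(x)=1+x+\cdots+x^{k-1}=\frac{x^k-1}{x-1}$, and using the standard identity $\gcd(x^k-1,x^n-1)=x^{\gcd(n,k)}-1$ together with $\ga_A(1)=k\neq 0$, one checks that $\gcd(\ga_A(x),x^n-1)$ has positive degree exactly when $\gcd(n,k)>1$, giving the same conclusion.
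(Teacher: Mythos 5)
Your proposal is correct and is exactly the argument the paper intends: it presents Lemma~\ref{lem:k} as a direct application of Corollary~\ref{cor:ab} (with $a=1$, $b=0$, $s=k$, $t=n-k$) and omits the proof for that reason. Your extra remarks on the boundary case $k=n$ and the alternative route via $\gcd(x^k-1,x^n-1)=x^{\gcd(n,k)}-1$ are sound but not needed.
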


We now rephrase Lemma~\ref{lem:singular} in terms of cyclotomic polynomials.
Let $A \in \M_n(\Z)$ be a circulant matrix with $[a_0, a_1, \ldots, a_{n-1}]$ as its first row. Then $\ga_A(x)=a_0+a_1x+\cdots +a_{n-1}x^{n-1}$ is the representer polynomial of $A$.  Now, suppose that $a_0 = 0$ and let  $k$ be the smallest positive integer
such that $a_k\neq 0$. Then   $\ga_A(x)=x^k \Ga_A(x)$, for some
polynomial $\Ga_A(x) \in \Z[x]$. In this case, it follows that the matrix
$A$ is non-singular if and only if $\gcd(\Ga_A(x), x^n-1)=1$ as
$\gcd(x^k, x^n-1)=1$. This observation leads to the next remark.

\begin{rem}\label{rem:Wn}
Let $A \in M_n(\Z)$ be  a circulant matrix and for each fixed
positive integer $k$ consider the matrix $W_n^k A$. Then $A$ is
singular (non-singular) if and only if $W_n^k A$ is
singular (non-singular). That is, if we want to study
singularity/non-singularity of a matrix $A$ then it is enough to study $\Ga_A(x)$.
\end{rem}

Using Remark~\ref{rem:Wn} and Lemma~\ref{lem:singular}, the following result is immediate and hence the proof is omitted.

\begin{lemma}\label{lem:Ga}
Let $A$ be a circulant digraph of order $n$ and let $\Ga_A(x)$ be the polynomial
defined above. Then $A$ is singular if and only if
$\Phi_d(x) \mid \Ga_A(x)$, for some divisor $d\neq 1$ of $n$.
\end{lemma}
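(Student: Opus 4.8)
The plan is to convert the singularity of $A$ into a divisibility statement about cyclotomic polynomials by combining Lemma~\ref{lem:singular} with the discussion preceding Remark~\ref{rem:Wn}. First I would record that, since $\gcd(x^k, x^n-1)=1$, the matrix $A$ is singular if and only if $\deg\bigl(\gcd(\Ga_A(x), x^n-1)\bigr)\ge 1$; this is exactly the reformulation already carried out in the paragraph just before Remark~\ref{rem:Wn}, so no new work is required at this step.

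Next I would feed in the factorization $x^n-1=\prod_{d\mid n}\Phi_d(x)$. Because each $\Phi_d(x)$ is monic and irreducible over $\Q$, and distinct cyclotomic polynomials are pairwise coprime, this is precisely the factorization of $x^n-1$ into pairwise coprime irreducibles in $\Q[x]$; in particular $x^n-1$ is squarefree. Consequently $\gcd(\Ga_A(x), x^n-1)$ has positive degree if and only if at least one of the irreducible factors $\Phi_d(x)$, with $d\mid n$, divides $\Ga_A(x)$: any nonconstant common factor must contain one of the irreducible factors of $x^n-1$, and conversely $\Phi_d(x)\mid\Ga_A(x)$ forces $\Phi_d(x)$ into the gcd, giving degree $\varphi(d)\ge 1$. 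Combining with the previous step, $A$ is singular if and only if $\Phi_d(x)\mid\Ga_A(x)$ for some $d\mid n$.

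The only genuine point, and the one that pins down the restriction $d\neq 1$, is to rule out $d=1$. Here I would use that $A$ is the adjacency matrix of a digraph: its first row has entries in $\{0,1\}$ and, by the standing assumption that the nonzero coefficients of $\ga_A(x)$ have gcd $1$, it is not identically zero. Writing $\ga_A(x)=x^k\Ga_A(x)$ and evaluating at $x=1$ gives $\Ga_A(1)=\ga_A(1)=a_0+a_1+\cdots+a_{n-1}$, which is the common out-degree of the vertices and hence a positive integer. Since $\Phi_1(x)=x-1$, this shows $\Phi_1(x)\nmid\Ga_A(x)$, so the divisor $d$ produced above can never equal $1$; this yields the forward direction with $d\neq 1$. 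Conversely, if $\Phi_d(x)\mid\Ga_A(x)$ for some $d\mid n$ with $d\neq 1$, then $\Phi_d(x)$ is a common factor of $\Ga_A(x)$ and $x^n-1$ of degree $\varphi(d)\ge 1$, so $A$ is singular. I expect the exclusion of $d=1$ to be the only step that actually invokes the digraph hypothesis; everything else is formal once the cyclotomic factorization is in hand, which is presumably why the authors omit the proof.
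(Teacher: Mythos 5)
Your proof is correct and follows exactly the route the paper intends: the authors omit the argument as ``immediate'' from Lemma~\ref{lem:singular}, Remark~\ref{rem:Wn}, and the factorization $x^n-1=\prod_{d\mid n}\Phi_d(x)$ into distinct irreducibles, which is precisely what you carry out. Your observation that $\Ga_A(1)=\ga_A(1)>0$ rules out $d=1$ is the one step the paper leaves entirely implicit, and you handle it correctly.
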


As an immediate corollary of Lemma~\ref{lem:Ga}, we have the following result.

\begin{cor}\label{cor:prime}
Let $p$ be a prime and let $k$ be a positive integer with $p\nmid k$. Also, let
$X$ be a $k$-regular circulant graph/digraph on
$p^{\ell}$ vertices, for some positive integer $\ell$. Then $X$ is
non-singular.
\end{cor}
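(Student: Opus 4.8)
The plan is to combine Lemma~\ref{lem:Ga} with the explicit description of $\Phi_{p^k}(x)$ from Lemma~\ref{lem:cyclo}, and then to force a contradiction by evaluating the relevant polynomials at $x=1$. The guiding observation is that $k$-regularity controls the value $\ga_A(1)$, while the cyclotomic factors that could witness singularity all take the value $p$ at $x=1$; since everything lives over $\Z[x]$, these two facts cannot coexist unless $p\mid k$.

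First I would record the regularity input. Since $X$ is $k$-regular, its first row contains exactly $k$ ones, so the representer polynomial satisfies $\ga_A(1)=k$. Writing $\ga_A(x)=x^m\,\Ga_A(x)$ as in the discussion preceding Remark~\ref{rem:Wn} (with $m\ge 0$ the smallest index of a nonzero coefficient), evaluation at $x=1$ gives $\Ga_A(1)=\ga_A(1)=k$ as well.

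Next I would narrow down the possible cyclotomic factors. The divisors $d\neq 1$ of $n=p^{\ell}$ are exactly $p,p^2,\ldots,p^{\ell}$, so by Lemma~\ref{lem:Ga} the digraph $X$ is singular if and only if $\Phi_{p^i}(x)\mid \Ga_A(x)$ for some $i$ with $1\le i\le \ell$. Here the second part of Lemma~\ref{lem:cyclo} is the key tool: $\Phi_{p^i}(x)=1+x^{p^{i-1}}+x^{2p^{i-1}}+\cdots+x^{(p-1)p^{i-1}}$, and hence $\Phi_{p^i}(1)=p$ for every $i\ge 1$. Assuming for contradiction that $X$ is singular, I would fix such an $i$ and write $\Ga_A(x)=\Phi_{p^i}(x)\,g(x)$ with $g(x)\in\Z[x]$; setting $x=1$ yields $k=\Ga_A(1)=\Phi_{p^i}(1)\,g(1)=p\,g(1)$, so $p\mid k$, contradicting the hypothesis $p\nmid k$. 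Thus $X$ is non-singular.

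The one step that must be handled carefully, rather than being a routine calculation, is this evaluation-at-$1$ argument. It relies crucially on two facts: that the factorization $\Ga_A(x)=\Phi_{p^i}(x)\,g(x)$ can be taken over $\Z[x]$ (so that $g(1)$ is an integer, which is exactly what the integrality statement after Lemma~\ref{lem:cyclo} guarantees via $\Phi_d(x)\in\Z[x]$), and that $\Phi_{p^i}(1)$ equals $p$ \emph{exactly}, not merely that it is divisible by $p$. Both are supplied by Lemma~\ref{lem:cyclo}, so I do not expect a genuine obstacle; the proof is essentially a one-line divisibility consequence once $\ga_A(1)=k$ and $\Phi_{p^i}(1)=p$ are in place.
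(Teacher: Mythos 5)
Your proof is correct and follows essentially the same route as the paper: reduce via Lemma~\ref{lem:Ga} to excluding the factors $\Phi_{p^i}(x)$, use $\Phi_{p^i}(1)=p$ from Lemma~\ref{lem:cyclo}, and derive the contradiction $p\mid k$ by evaluating the integral factorization at $x=1$. Your write-up is slightly more explicit about why $\Ga_A(1)=k$ follows from $k$-regularity, but the argument is identical.
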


\begin{proof}
Using Lemma~\ref{lem:Ga}, we just need to show that $\Phi_d(x)\nmid
\Ga_A(x)$ for every  $d \mid p^{\ell}, d \ne 1$.
Let if possible, $\Ga_A(x) = \Phi_d(x) g(x)$ for some $g(x) \in \Z[x]$. Using
Lemma~\ref{lem:cyclo}, we have $\Phi_d(1)=p$ for every  $d \mid p^{\ell}, d
\ne 1$. As $g(x) \in \Z[x], g(1) \in \Z$. Thus,  we get
$$ k = \Ga_A(1) = \Phi_d(1) g(1) = p \; g(1).$$ A
contradiction to our assumption that $p \nmid k$. Thus the proof of
the result is complete.
\end{proof}

The remaining part of this paper consists of two more sections that are mainly concerned with applications of Lemma~\ref{lem:Ga}. Section~\ref{sec3:2}
gives necessary and sufficient conditions for a few classes of circulant
graphs  to be non-singular and
Section~\ref{sec3:3} gives possible generalization of the results
studied in Section~\ref{sec3:2}.

Before proceeding to Section~\ref{sec3:2}, recall that for a graph
$X=(V,E_1)$,  the complement graph of $X$, denoted $X^c= (V,E_2)$,  is a graph in which $(u,v) \in E_2$ whenever $(u,v) \not\in E_1$ and vice versa, for every $u \ne v \in V$. Note that $(u,u)$ is neither an element of $E_1$ nor an element of $E_2$. Also,  a graph  $X$ is circulant if and only if $X^c$ is  circulant and if $A$ is the adjacency matrix of $X$ then the adjacency matrix of $X^c$ is given by $\J - A
- I$.

\section{Some Singular Circulant Graphs}\label{sec3:2}
This section is devoted to finding necessary and sufficient conditions for a few
classes of circulant graphs  to be singular or not. Before proceeding to these results, we show that the adjacency matrix $A$ of a circulant graph on $n$ vertices is a polynomial in $W_n + W_n^{-1}$, the adjacency matrix of $C_n$, the cycle graph on $n$ vertices. To do so, we need the following definition.

\begin{definition}\label{def:dis}
 Let $v_1, v_2, \ldots, v_n$ be the vertices of a connected graph $X$. If $d$ is the diameter of $X$ then, for  $0\le k \le d$, the {\em  $k$-th distance matrix}\index{$k$-th distance matrix} of $X$, denoted
$A_k(X)$,  is defined as
$$ (A_{k}(X))_{rs} = \left\{ \begin{array}{cl} 1, & {\mbox{ if }} d(v_{r},v_{s}) =
k,  \\ 0, & {\mbox{ otherwise,}} \end{array} \right. $$
where $d(u,v)$ is the distance between the vertices $u, v \in V$.
\end{definition}

For example, $\tau = \lfloor \frac{n}{2} \rfloor$ and consider the cycle graph $C_n$. Also, let us write $A_k$ to denote the distance matrices $A_k(C_n)$, for $0 \le k \le \tau$. Then, for  $1\le i <  \tau, \; A_i = W_n^i + W_n^{n-i}$ and
\begin{equation}\label{eqn:dis:cn}
 A_{\tau} = \begin{cases} W_n^{\tau}, &
\mbox{ if } n \mbox{ is even},\\
W_n^{\tau} + W_n^{n-\tau}, & \mbox{ if } n \mbox{ is odd}.
\end{cases}
\end{equation}

The identity $$(x^k+x^{-k})=(x + x^{-1}) (x^{k-1}+x^{1-k})-(x^{k-2}+x^{2-k})$$
enables us to  readily establish, by mathematical induction, that $x^k+x^{-k}$ is a monic polynomial in $x+x^{-1}$ of degree $k$ with integral coefficients. Also, for $n$ even, $2 \tau = n$ and hence $W_n^{\tau} = \frac{W_n^{\tau} + W_n^{-\tau}}{2}$. Consequently, $A_i$'s, for $1\le i \le \tau$, are polynomials of degree
$\leq i$, in $A$, the adjacency matrix of $C_n$, over $\Q$. Now, let $B$ be a  symmetric circulant matrix with  representer
polynomial $\ga_B(x)=\sum\limits_{i=0}^{n-1}b_ix^i$. Then by definition,
$B=\ga_B(W_n) = \sum\limits_{i=0}^{n-1}b_iW_n^i$ and $B^t=\sum\limits_{i=0}^{n-1}b_iW_n^{n-i}$. But $B$ is symmetric implies that $B = B^t$ and therefore,  $b_i=b_{n-i}$, for $1\leq i\leq n-1$. Thus,
$B=\sum\limits_{i=0}^{\tau}b_iA_i$ and hence we see that the adjacency matrix of any circulant graph is a polynomial in $A$, the adjacency matrix of $C_n$, over $\Q$.

For $ 1 \le i \le \tau$, let us denote the graph with  adjacency matrix $A_i$  as $C_n^i$. Then observe  that $C_n^1 = C_n$ is the cycle
graph on $n$ vertices. Also, note that the corresponding representer
polynomials, for $1 \le i < \tau$, are given
by $\ga_{A_i}(x) = x^i \left( 1 + x^{n-2 i} \right)$ and
$ \ga_{A_{\tau}}(x) = x^{\tau}$, if $n$ is even, and $\ga_{A_{\tau}}(x) = x^{\tau}(1 + x)$, if $n$ is
odd. The next result uses the above notations and observations to
give a necessary and sufficient condition for the graphs $C_n^i$, for
$1 \le i \le \tau$, to be singular.

\begin{lemma}\label{lem:cni}
Fix a positive integer $n \ge 3$ and let $1 \le i \le \tau=\lfloor \frac{n}{2}
\rfloor.$ Then the  graph $C_n^i$  is singular if and only if  $n$ is a multiple
of $4$ and $\gcd(i, \frac{n}{2}) \mid \frac{n}{4}$.
\end{lemma}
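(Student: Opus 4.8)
The plan is to reduce the singularity question to the existence of a zero eigenvalue and then to a linear congruence. By Lemma~\ref{lem:circu}, the eigenvalues of $A_i$ are $\ga_{A_i}(\ze_n^k)$ for $k=0,1,\dots,n-1$. For $1 \le i < \tau$ (and also for $i=\tau$ when $n$ is odd, where $n-2i=1$) the representer polynomial factors as $\ga_{A_i}(x)=x^i(1+x^{m})$ with $m=n-2i \ge 1$, so that $\ga_{A_i}(\ze_n^k)=\ze_n^{ik}\bigl(1+\ze_n^{mk}\bigr)$. Since $\ze_n^{ik}\neq 0$, the matrix $A_i$ is singular precisely when $\ze_n^{mk}=-1$ for some $k$; equivalently, by Lemma~\ref{lem:singular}, when $1+x^m$ and $x^n-1$ share a root. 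The single remaining case, $i=\tau$ with $n$ even, I would dispose of at once: there $\ga_{A_\tau}(x)=x^\tau$, whose eigenvalues $\ze_n^{\tau k}$ are all nonzero, so $C_n^\tau$ is non-singular, and this agrees with the claimed criterion since $\gcd(\tau,\tfrac n2)=\tfrac n2$ does not divide $\tfrac n4$.

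Next I would analyze the condition $\ze_n^{mk}=-1$. If $n$ is odd then $-1$ is not an $n$-th root of unity, so no such $k$ exists and $C_n^i$ is non-singular; this is consistent with the stated criterion, which forces $4 \mid n$. If $n$ is even, then $-1=\ze_n^{n/2}$, and $\ze_n^{mk}=-1$ holds for some integer $k$ if and only if the linear congruence $m k \equiv \tfrac n2 \pmod n$ is solvable, which happens exactly when $\gcd(m,n)\mid \tfrac n2$.

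It then remains to translate $\gcd(m,n)\mid \tfrac n2$ into the form in the statement. Using $m=n-2i$ I have $\gcd(m,n)=\gcd(n-2i,n)=\gcd(2i,n)$, and since $n$ is even, $\gcd(2i,n)=2\gcd(i,\tfrac n2)$. Writing $g=\gcd(i,\tfrac n2)$, the divisibility $2g \mid \tfrac n2$ is equivalent to $4 \mid n$ together with $g \mid \tfrac n4$, which is precisely the asserted condition. Finally I would check that the ranges are consistent ($m=n-2i\ge 2$ for $1\le i<\tau$, so the factorization and the identity $\ze_n^{mk}=-1$ are genuinely available) and that the $n$-odd and $i=\tau$ discussions above cover every excluded case.

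I expect the main obstacle to be the number-theoretic bookkeeping in the last paragraph: establishing the identity $\gcd(2i,n)=2\gcd(i,\tfrac n2)$ and, above all, arguing cleanly that $2\gcd(i,\tfrac n2)\mid \tfrac n2$ forces $4\mid n$ (so that $\tfrac n4$ is an integer) before the condition $\gcd(i,\tfrac n2)\mid \tfrac n4$ can even be stated. Keeping the parity cases ($n$ odd, $n\equiv 2\pmod 4$, and $4\mid n$) and the degenerate endpoint $i=\tau$ aligned with the single uniform criterion is the part most prone to slips, whereas the reduction to the congruence $mk\equiv \tfrac n2 \pmod n$ is routine once the eigenvalue formula is in hand.
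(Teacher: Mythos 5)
Your proof is correct and follows essentially the same route as the paper: reduce to the eigenvalue condition $\ze_n^{(n-2i)k}=-1$, solve the resulting linear congruence, and translate the solvability condition into $4\mid n$ and $\gcd(i,\tfrac n2)\mid\tfrac n4$. If anything, your handling of the edge cases ($i=\tau$, $n$ odd, and the step where $4\mid n$ must be established before $\tfrac n4$ can be written) is more explicit than the paper's.
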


\begin{proof}
Using the discussion above, $\Ga_{A_i}(x)= 1+ x^{n-2i}$, for $1 \le i < \tau$ and
$\Ga_{A_{\tau}}(x) = 1 + x$, if $n$ is odd, and $\Ga_{A_{\tau}}(x) = 1$, if $n$ is
even. If $n$ is odd then $\ze_n^k \ne -1$ for any $k, 1 \le k \le n-1$. Hence, $A_{\tau}$ is non-singular for all $n$. So, we
need to consider $\Ga_{A_i}(x)= 1+ x^{n-2i}$, for $1 \le i < \tau$.

In this case, $A_i$ is singular if and only if $\Ga_{A_i}(\ze_n^k) = 0$,
for some $k, 1 \le k \le n-1$. That is, we need
$\left(\ze_n^k\right)^{n-2 i} = -1$. Or equivalently, we need
$2 k i \equiv \frac{n}{2} \pmod n $. That is, $k i \equiv
\frac{n}{4} \pmod {\frac{n}{2}}.$ Therefore, it follows that $4$ divides $n$ and
$\gcd(i, \frac{n}{2}) \mid \frac{n}{4}$.
\end{proof}

\begin{rem}
We can rewrite the condition in Lemma~\ref{lem:cni} as follows: \\
The  graph $C_n^i$  is
singular if and only if  the following conditions are satisfied:
\begin{enumerate}
\item $n$ is a multiple of $4$, and
\item if $s$ is the largest positive integer such that $2^s$ divides $n$ then
$i$ is an odd multiple of $2^{t}$ for some  $t, \; 0 \le t \le s-2$.
\end{enumerate}
\end{rem}

As an immediate consequence of Lemma~\ref{lem:cni}, we have the following
corollary.
\begin{cor}
Let $C_n$ be the cycle graph on $n$ vertices. Then  $C_n$ is singular if and
only if $4 \mid n$.
\end{cor}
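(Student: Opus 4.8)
The plan is to derive the corollary directly from Lemma~\ref{lem:cni} by specializing the index to $i = 1$, since $C_n = C_n^1$ as noted in the discussion preceding that lemma. First I would check that the hypotheses apply: for every $n \ge 3$ one has $\tau = \lfloor \frac{n}{2} \rfloor \ge 1$, so $i = 1$ lies in the admissible range $1 \le i \le \tau$, and Lemma~\ref{lem:cni} is available.

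Next I would substitute $i = 1$ into the criterion of the lemma. It asserts that $C_n^1$ is singular if and only if $4 \mid n$ and $\gcd(1, \frac{n}{2}) \mid \frac{n}{4}$. The key simplification is that $\gcd(1, \frac{n}{2}) = 1$ and $1$ divides every integer, so the second divisibility condition holds automatically whenever $\frac{n}{4}$ is defined, that is, whenever $4 \mid n$. Thus the two-part criterion collapses to the single condition $4 \mid n$, which is exactly the claimed equivalence.

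Since the reduction is purely arithmetical, there is no genuine obstacle here; the only point that warrants care is the boundary value $n = 3$, where $i = 1$ coincides with $\tau = 1$. In that case the proof of Lemma~\ref{lem:cni} handles the index $i = \tau$ with $n$ odd and returns non-singularity, which is consistent with the corollary since $3$ is not a multiple of $4$. The hard part, such as it is, is therefore merely to confirm that specializing the gcd-divisibility condition at $i = 1$ renders it vacuous.
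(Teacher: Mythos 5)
Your proposal is correct and matches the paper's intent exactly: the paper presents this corollary as an immediate consequence of Lemma~\ref{lem:cni} (with no written proof), and specializing to $i=1$ so that $\gcd(1,\frac{n}{2})=1$ trivially divides $\frac{n}{4}$ is precisely the intended reduction. Your extra check of the boundary case $n=3$ is careful but not needed, since the lemma's statement already covers $i=\tau$ uniformly.
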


The next result gives a necessary and sufficient condition for the complement
graph $(C_n^i)^c$ of $C_n^i$ to be singular.

\begin{lemma}\label{lem:cnic}
Fix a positive integer $n \ge 4$. Then the graph
\begin{enumerate}
\item $(C_n^{\tau})^c$ is singular if and only if $n$ is even or
$n \equiv 3 \pmod 6$.
\item $(C_n^i)^c$, for $1 \le i < \tau$,  is singular if
and only if  $3 \mid n$ and $\gcd(i,n) \mid \frac{n}{3}.$
\end{enumerate}
\end{lemma}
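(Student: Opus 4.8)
The plan is to apply Lemma~\ref{lem:singular} (equivalently the eigenvalue description in Lemma~\ref{lem:circu}) to the complement, whose adjacency matrix is $\J - A_i - I$. Since $\ga_{\J}(x) = 1 + x + \cdots + x^{n-1}$ and $\ga_I(x) = 1$, the representer polynomial of the complement is
$$\ga(x) = \left(1 + x + \cdots + x^{n-1}\right) - 1 - \ga_{A_i}(x).$$
By Lemma~\ref{lem:circu} the graph is singular if and only if $\ga(\ze_n^k) = 0$ for some $k \in \{0,1,\ldots,n-1\}$. At $k = 0$, $\ga(1)$ equals the valency of the complement ($n-2$ or $n-3$), which is nonzero for $n \ge 4$, so only $k \ne 0$ is relevant. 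For such $k$ the geometric-series cancellation gives $\sum_{j=1}^{n-1}\ze_n^{kj} = -1$.

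I would first treat part (2). Here $\ga_{A_i}(x) = x^i + x^{n-i}$, so for $k \ne 0$,
$$\ga(\ze_n^k) = -1 - \ze_n^{ki} - \ze_n^{-ki} = -\left(1 + \omega + \omega^{-1}\right), \qquad \omega := \ze_n^{ki}.$$
This vanishes exactly when $\omega^2 + \omega + 1 = 0$, i.e. when $\omega$ is a primitive cube root of unity; equivalently $n \mid 3ki$ but $n \nmid ki$. Setting $d = \gcd(i,n)$ and $N = n/d$, the residues $ki \bmod n$ run over the multiples $cd$ with $0 \le c \le N-1$, and the condition becomes the existence of $c \in \{1,\ldots,N-1\}$ with $N \mid 3c$. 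Because $i < \tau \le n/2$ forces $d \le i < n/2$ and hence $N \ge 3$, such a $c$ exists iff $3 \mid N$ (taking $c = N/3$); if $3 \nmid N$ the least positive solution is $c = N$, out of range. Thus $(C_n^i)^c$ is singular iff $3 \mid n/\gcd(i,n)$, and I would finish by checking this equals ``$3 \mid n$ and $\gcd(i,n) \mid \tfrac{n}{3}$'' by comparing the $3$-adic valuations of $\gcd(i,n)$ and $n$.

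For part (1) I would split on the parity of $n$. When $n$ is even, $\ga_{A_\tau}(x) = x^\tau$ with $\tau = n/2$, so for $k \ne 0$ one has $\ga(\ze_n^k) = -1 - \ze_n^{kn/2} = -1 - (-1)^k$, which is $0$ for every odd $k$; since such a $k$ exists, $(C_n^\tau)^c$ is always singular. When $n$ is odd, $\ga_{A_\tau}(x) = x^\tau + x^{\tau+1}$ with $\tau + 1 = n - \tau$, so the computation again collapses to $-(1 + \omega + \omega^{-1})$ with $\omega = \ze_n^{k\tau}$, and the analysis of part (2) applies with $i$ replaced by $\tau$. The clean point here is that $\gcd(\tau,n) = 1$ for odd $n$ (a common divisor of $\tfrac{n-1}{2}$ and $n$ divides $n-1$ and $n$, hence is $1$), so $N = n$ and the criterion $3 \mid N$ reduces to $3 \mid n$. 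Combining, $(C_n^\tau)^c$ is singular iff $n$ is even or ($n$ odd and $3 \mid n$), i.e. iff $n$ is even or $n \equiv 3 \pmod 6$.

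I expect the only genuine work to be the number-theoretic step: correctly characterizing when $\ze_n^{ki}$ can be made a primitive cube root of unity as $k$ varies, and verifying that $3 \mid n/\gcd(i,n)$ is the same as the stated divisibility condition. The reductions of $\ga(\ze_n^k)$ are routine once one notices the geometric-series cancellation and the pairing $\omega \leftrightarrow \omega^{-1}$ coming from the symmetries $i \leftrightarrow n-i$ and $\tau \leftrightarrow \tau+1$.
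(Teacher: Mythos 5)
Your proposal is correct and follows essentially the same route as the paper: pass to the representer polynomial $\frac{x^n-1}{x-1}-1-\ga_{A_i}(x)$ of the complement, use the geometric-series cancellation at $\ze_n^k$ to reduce singularity to $1+\omega+\omega^{-1}=0$ with $\omega=\ze_n^{ki}$ (resp.\ $\omega=\ze_n^{k\tau}$), i.e.\ to $\omega$ being a primitive cube root of unity, and then translate this into the divisibility condition. Your write-up is in fact slightly more explicit than the paper's at the final number-theoretic step (the $d=\gcd(i,n)$, $N=n/d$ analysis), where the paper is terse, but the argument is the same.
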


\begin{proof}
Let $n$ be even. Then, using definition of complement of a graph, the adjacency matrix of $(C_n^{\tau})^c$, say $A$, is given by $\J  - A_{\tau} - I$. Hence, $\ga_A(x) = 1 + x + \cdots + x^{n-1} - 1 - \ga_{A_{\tau}}(x)$ is the representer polynomial of $(C_n^{\tau})^c$.  Thus,
$$\ga_A(x) = \frac{x^n - 1}{x-1} - (1 + x^{\tau})$$ and hence $\ga_A(\ze_n) =
0$ ($n$ is even). Thus, the graph $(C_n^{\tau})^c$
is singular, whenever $n$ is even. For $n$ odd, it can be checked that
$$\ga_A(x) = \frac{x^n - 1}{x-1} - (1  + x^{\tau} + x^{\tau+1}).$$
Consequently, $(C_n^{\tau})^c$ is singular if and only if $\ga_A(\ze_n^k) =0$,
for some $k, 1 \le k \le n-1$. Or equivalently, $1 + (\ze_n^k)^{\tau} +
(\ze_n^k)^{\tau+1} = 0$, for some $k, 1 \le k \le n-1$. This is equivalent to the
statement that $\ze_n^{k \tau}$ is a primitive $3$-rd root of unity. Thus, $k
\tau \equiv \frac{n}{3} \pmod n$, or equivalently, $\gcd( \tau, n) \mid
\frac{n}{3}$. Thus, $n \equiv 3 \pmod 6$ and in this case, $\gcd( \tau, n)$ indeed divides $\frac{n}{3}$.

Now assume that $1 \le i < \tau.$ In this case, if $A$ is the
adjacency matrix of $(C_n^i)^c$, then $A=\J - A_i -
I$. Consequently, its representer polynomial is
$\ga_A(x)=\frac{x^n-1}{x-1}-(1+x^i+x^{n-i})$.  Thus,
$(C_n^i)^c$ is singular if and only if $\ga_A(\ze_n^k)=0 $, for some
$k, 1 \le k \le n-1$. Or equivalently, $1+\ze_n^{ki}+\ze_n^{- ki}=0$, for some $k$, $1 \le k \le n-1$. That is, $\ze_n^{k i}$ is a primitive $3$-rd root of unity.
Thus,  using the argument similar to one in the first part, one has
$(C_n^i)^c$ is singular if and only if
 $3 \mid n$ and $\gcd(i,n) \mid \frac{n}{3}.$
\end{proof}

As an immediate consequence of Lemma~\ref{lem:cnic}, we have the following
corollary.

\begin{cor}
Fix a positive integer $n$ and let  $C_n^c$ be the complement graph of the cycle
graph $C_n$. Then the complement graph $C_n^c$ is singular if and only if $3
\mid n$.
\end{cor}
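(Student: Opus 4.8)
The plan is to specialize Lemma~\ref{lem:cnic} to the case $i = 1$, exploiting the fact that the cycle graph $C_n$ is precisely $C_n^1$, so that $C_n^c = (C_n^1)^c$. First I would record that for $n \ge 4$ one has $\tau = \lfloor \frac{n}{2} \rfloor \ge 2$, whence the index $i = 1$ satisfies $1 \le i < \tau$ and therefore falls under part (2) of Lemma~\ref{lem:cnic}. Applying that part with $i = 1$, the graph $C_n^c$ is singular if and only if $3 \mid n$ and $\gcd(1,n) \mid \frac{n}{3}$. Since $\gcd(1,n) = 1$ divides every integer, the second condition is automatic whenever $\frac{n}{3}$ is an integer, so the criterion collapses to the single condition $3 \mid n$, as claimed.

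The one point needing separate attention is the boundary value $n = 3$, where $\tau = 1$ and so $i = 1$ coincides with $\tau$ rather than lying strictly below it; here part (2) does not apply and I would instead invoke part (1) of Lemma~\ref{lem:cnic}. For $n = 3$ we have $C_3 = C_3^{\tau}$, and part (1) declares $(C_3^{\tau})^c$ singular precisely when $n$ is even or $n \equiv 3 \pmod 6$; as $3 \equiv 3 \pmod 6$, the graph $C_3^c$ is indeed singular, in agreement with $3 \mid 3$. Alternatively, one can observe directly that $C_3^c$ is the edgeless graph on three vertices, whose adjacency matrix is the zero matrix and hence singular.

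Since the derivation is a direct specialization, there is no substantive obstacle; the only care required is in matching the index $i = 1$ to the correct case of Lemma~\ref{lem:cnic}, which hinges on whether $\tau$ exceeds $1$. For all $n \ge 4$ this membership is automatic, and the single small case $n = 3$ is disposed of as above, completing the equivalence for every $n \ge 3$.
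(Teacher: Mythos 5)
Your proposal is correct and matches the paper's intent exactly: the corollary is presented there as an immediate consequence of Lemma~\ref{lem:cnic} with no written proof, and your specialization to $i=1$ in part (2), where $\gcd(1,n)=1$ makes the divisibility condition vacuous, is precisely the intended argument. Your separate treatment of the boundary case $n=3$ (via the edgeless complement) is a careful addition the paper glosses over.
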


We now obtain necessary and sufficient conditions for
non-singularity of circulant graphs that were studied by Ruivivar~\cite{leo}. In~\cite{leo}, the author studied two classes of graphs. For the sake of notational clarity, his notations have been
slightly modified. Fix a positive integer $n \ge 3$ and let  $1 \le
r < \tau=\lfloor{\frac{n}{2}}\rfloor$. The first class of circulant
graphs, denoted $C_n^{(r)}$, has the same vertex set as the vertex
set of the cycle $C_n$ and  $\{x, y\}$ is an edge whenever the
length of the smallest path from $x$ to $y$ in $C_n$ is at most $r$.
He called these graphs the $r$-th power graph of the cycle graph
$C_n$. Note that $C_n^{(\tau)}$ is the complete graph.
The second class of graphs, denoted
$C(2 n,r)$ is a graph on $2 n$ vertices and its adjacency matrix is
the sum of the adjacency matrices of  $C_{2n}^{(r)}$ and $C_{2n}^n$,
where  $1\leq r < n$. The next result appears as Theorem~2.2
of~\cite{leo}. We give a separate proof  for the sake of
completeness.

\begin{theorem}[Ruivivar~\cite{leo}]
Let $n\geq 3$ and let $1\leq r < \lfloor{\frac{n}{2}}\rfloor$. Then
the graph $C_n^{(r)}$ is singular if and only if one of the
following conditions hold: \begin{enumerate}
\item  $\gcd(n,r)>1$
\item $\gcd(n,r)=1$,  $n$ is even and $\gcd(r+1,n)$ divides $\frac{n}{2}$.
\end{enumerate}
\end{theorem}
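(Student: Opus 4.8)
The plan is to compute the representer polynomial of $C_n^{(r)}$ explicitly and then apply Lemma~\ref{lem:Ga}. Since $1 \le r < \tau$, the adjacency matrix $A$ of $C_n^{(r)}$ is exactly $A_1 + A_2 + \cdots + A_r$, where each distance matrix $A_i = W_n^i + W_n^{n-i}$ has representer polynomial $x^i + x^{n-i}$. Summing, $\ga_A(x) = \sum_{i=1}^r (x^i + x^{n-i})$, and a short computation factors this as
$$\ga_A(x) = x\,(1 + x + \cdots + x^{r-1})(1 + x^{n-r-1}),$$
so that in the notation of Remark~\ref{rem:Wn} one has $\Ga_A(x) = (1 + x + \cdots + x^{r-1})(1 + x^{n-r-1})$. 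The hypothesis $r < \tau$ guarantees $2r < n$, so the two blocks of exponents $\{1,\dots,r\}$ and $\{n-r,\dots,n-1\}$ are disjoint and the factorization is valid.

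By Lemma~\ref{lem:Ga}, or directly via Lemma~\ref{lem:singular} together with Remark~\ref{rem:Wn}, the matrix $A$ is singular if and only if $\Ga_A(\ze_n^k) = 0$ for some $k$ with $1 \le k \le n-1$, that is, if and only if one of the two factors vanishes at such a $\ze_n^k$. I would treat the factors separately. For the first factor, $1 + \ze_n^k + \cdots + \ze_n^{k(r-1)} = \frac{\ze_n^{kr} - 1}{\ze_n^k - 1}$ (the denominator being nonzero since $\ze_n^k \ne 1$), which vanishes if and only if $n \mid kr$. Writing $g = \gcd(n,r)$, this says $\frac{n}{g} \mid k$, and such a $k \in \{1,\dots,n-1\}$ exists precisely when $\frac{n}{g} \le n-1$, i.e. when $g > 1$. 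Thus the first factor contributes exactly condition (1), $\gcd(n,r) > 1$.

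For the second factor, $1 + \ze_n^{k(n-r-1)} = 0$ means $\ze_n^{k(r+1)} = -1$, using $n - r - 1 \equiv -(r+1) \pmod n$. If $n$ is odd then $-1$ is not an $n$-th root of unity, so there is no solution; if $n$ is even then $-1 = \ze_n^{n/2}$ and the condition becomes the linear congruence $k(r+1) \equiv \frac{n}{2} \pmod n$, which is solvable if and only if $\gcd(r+1, n) \mid \frac{n}{2}$ (and any solution is automatically $\not\equiv 0 \pmod n$, hence lies in $\{1,\dots,n-1\}$). So the second factor contributes exactly the condition that $n$ is even and $\gcd(r+1,n) \mid \frac{n}{2}$.

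Combining, $C_n^{(r)}$ is singular if and only if $\gcd(n,r) > 1$, or $n$ is even and $\gcd(r+1,n) \mid \frac{n}{2}$. The final step is to reconcile this with the stated form: writing $P$ for ``$\gcd(n,r)>1$'' and $Q$ for ``$n$ even and $\gcd(r+1,n)\mid \frac{n}{2}$'', what I obtain is $P \vee Q$, whereas the theorem lists $P \vee (\neg P \wedge Q)$; since $P \vee (\neg P \wedge Q) = P \vee Q$, the two are identical, and the extra hypothesis $\gcd(n,r)=1$ in case (2) only serves to make the two listed cases disjoint. I expect the only delicate points to be the exponent bookkeeping in the factorization and the parity argument that $-1$ is a power of $\ze_n$ exactly when $n$ is even; the remaining solvability of the linear congruences is routine.
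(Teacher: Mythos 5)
Your proposal is correct and follows essentially the same route as the paper: the same factorization $\Ga_A(x) = \frac{x^r-1}{x-1}(1+x^{n-r-1})$, followed by analyzing when each factor vanishes at an $n$-th root of unity. Your treatment is somewhat more careful than the paper's about the solvability of the congruence $k(r+1)\equiv \frac{n}{2}\pmod n$ and about reconciling $P\vee Q$ with the disjoint case split in the statement, but the underlying argument is identical.
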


\begin{proof}
Let $A$ be the adjacency matrix of the graph $C_n^{(r)}$. Then, by definition,
the first row of $A$ equals $[0,\underbrace{1,1,\ldots,
1}_r\;\underbrace{0,0,\ldots, 0}_{n-2r-1}\;\underbrace{1,1,\ldots,
1}_r]$ and $\ga_A(x)= x\Ga_A(x)$, where
\begin{equation*}\Ga_A(x)=[1+x+\dots +x^{r-1}]+x^{n-r-1}[1+x+\dots +x^{r-1}] =
\frac{x^r-1}{x-1} (1+x^{n-r-1}).
\end{equation*}
Therefore,  $C_n^{(r)}$ is singular if and only if
$\Ga_A(\ze_n^d)=0$, for some $d, 1\leq d\leq n-1$. Or equivalently either
$(\ze_n^d)^r-1=0$ or $1+(\ze_n^d)^{n-r-1}=0$.

If $(\ze_n^d)^r-1=0$  then  $\gcd(r,n)>1$ is the required condition as
$1\leq d\leq n-1$. If $\gcd(r,n)=1$ then we need $1+(\ze_n^d)^{n-r-1}=0$. This
implies that $d(r+1) \equiv \frac{n}{2} \pmod n$. Which in turn gives the
required result.

Thus, the proof of the theorem is complete.
\end{proof}

The following result can be seen as a corollary to
Lemma~\ref{lem:k}. But an idea of the proof is given for completeness.

\begin{cor}
Let $n\geq 3$ and let $1\leq r < \lfloor{\frac{n}{2}}\rfloor$. Then the
graph $(C_n^{(r)})^c$ is non-singular if and only if
$\gcd(n,2r+1)=1$.
\end{cor}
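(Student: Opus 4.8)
The plan is to compute the representer polynomial of $(C_n^{(r)})^c$ and apply Lemma~\ref{lem:k} (or Lemma~\ref{lem:Ga}) directly. Since $C_n^{(r)}$ is the $r$-th power of the cycle, its adjacency matrix $A$ has first row $[0,\underbrace{1,\ldots,1}_r,\underbrace{0,\ldots,0}_{n-2r-1},\underbrace{1,\ldots,1}_r]$, so the complement $(C_n^{(r)})^c = \J - A - I$ has first row $[0,\underbrace{0,\ldots,0}_r,\underbrace{1,\ldots,1}_{n-2r-1},\underbrace{0,\ldots,0}_r]$. The key observation is that this is exactly a \emph{shifted} block of $n-2r-1$ consecutive $1$'s: writing $B$ for the adjacency matrix of $(C_n^{(r)})^c$, we have $\ga_B(x)=x^{r+1}(1+x+\cdots+x^{n-2r-2}) = x^{r+1}\Ga_B(x)$ with $\Ga_B(x)=\frac{x^{n-2r-1}-1}{x-1}$.

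First I would note that, up to multiplication by a power of $W_n$ (which does not affect singularity by Remark~\ref{rem:Wn}), the matrix $B$ is precisely a $k$-element circulant digraph with $k=n-2r-1$ consecutive ones. Thus Lemma~\ref{lem:k} applies and gives that $(C_n^{(r)})^c$ is non-singular if and only if $\gcd(n,\,n-2r-1)=1$. Since $\gcd(n,n-2r-1)=\gcd(n,2r+1)$, this is exactly the stated condition $\gcd(n,2r+1)=1$.

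Alternatively, to argue directly via Lemma~\ref{lem:Ga} one would observe that $\Ga_B(\ze_n^d)=0$ for some $d$ with $1\le d\le n-1$ if and only if $(\ze_n^d)^{n-2r-1}=1$ but $\ze_n^d\neq 1$, i.e. if and only if $\ze_n^d$ is a nontrivial common root of $x^{n-2r-1}-1$ and $x^n-1$; such a root exists precisely when $\gcd(n,n-2r-1)=\gcd(n,2r+1)>1$. Hence $(C_n^{(r)})^c$ is non-singular exactly when $\gcd(n,2r+1)=1$.

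The only point requiring a little care—and really the whole substance of the argument—is the correct bookkeeping of the first row of the complement: one must check that after removing the diagonal entry and the two blocks of $r$ ones on either end, what remains is a single contiguous run of $1$'s of length $n-2r-1$, so that it is genuinely a $k$-element circulant digraph (translated) rather than some more complicated pattern. This relies on the fact that the $r$ ones at the ``top'' and the $r$ ones at the ``bottom'' of the cycle-power row, together with the diagonal $0$, occupy exactly the $2r+1$ positions complementary to the middle block. I expect no genuine obstacle here; the result is essentially immediate once the representer polynomial is identified and the identity $\gcd(n,n-2r-1)=\gcd(n,2r+1)$ is invoked.
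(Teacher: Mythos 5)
Your proposal is correct and follows essentially the same route as the paper: both reduce the complement to a shifted block of $n-2r-1$ consecutive ones, invoke Remark~\ref{rem:Wn} and Lemma~\ref{lem:k}, and conclude via $\gcd(n,n-2r-1)=\gcd(n,2r+1)$. The bookkeeping of the first row and the degree count are both right, so nothing further is needed.
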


\begin{proof}
Let $A$ be the adjacency matrix of $(C_n^{(r)})^c$. Then $[\underbrace{0,0,\ldots,
0}_{r+1}\;\underbrace{\;1,\ldots, 1}_{n-2r-1}\;\underbrace{0,0,\ldots,
0}_r]$ is the first row of $A$. Thus, using Remark~\ref{rem:Wn}, $A$ is singular
if and only
if  the circulant matrix with
$[\underbrace{1,1,\ldots,1}_{n-2r-1}\;\underbrace{0,0,\ldots,
0}_{2r+1}]$ as its first row is singular. Thus, using Lemma~\ref{lem:k}, $A$ is
singular if and only if
 $\gcd(2r+1,n)>1$. Hence,  the required result follows.
\end{proof}

Before proceeding with the next result that gives a necessary and
sufficient condition for the graph $C(2n, r)$ to be singular, we
state a result that appears as Proposition~$1$ in  Kurshan $\&$  Odlyzko~\cite{odlyzko}

\begin{lemma}[Kurshan $\&$ Odlyzko~\cite{odlyzko}]\label{lem:odlyzko}
Let $m$ and $n$ be positive integers with $m \ne n$ and let $\ze_n$
be a primitive $n$-root of unity. Then there exists a unit $u \in
\Z[\ze_n]$ dependent on $m, n$ and $\ze_n$ such that
$$\Phi_m(\ze_n) = \left\{ \begin{array}{lllll}
p u, & {\mbox{ if }} \frac{m}{n} = p^\al, & p {\mbox{ a prime,}} &
\al > 0; &  \\ (1 - \ze_{p^\al}) u, & {\mbox{ if }} \frac{m}{n} =
p^{-\al}, & p {\mbox{ a prime,}} & \al > 0; &  p \nmid m; \\
(1 - \ze_{p^\al+1})^{p-1} u, & {\mbox{ if }} \frac{m}{n} =
p^{-\al}, & p {\mbox{ a prime,}} & \al > 0; &  p \mid m; \\
u, & {\mbox{ otherwise.}} & & & \end{array}\right.$$
\end{lemma}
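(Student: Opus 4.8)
The plan is to start from the M\"obius-inversion form of the cyclotomic factorisation. M\"obius inverting $x^m-1=\prod_{d\mid m}\Phi_d(x)$ gives the identity
$$\Phi_m(x)=\prod_{d\mid m}(x^d-1)^{\mu(m/d)},$$
where $\mu$ is the M\"obius function. Substituting $x=\ze_n$ turns the problem into understanding the factors $\ze_n^d-1$. For $d\mid m$ the element $\ze_n^d$ is a primitive $e$-th root of unity with $e=n/\gcd(n,d)$, so $\ze_n^d-1$ is an associate of $\ze_e-1$; note $e\mid n$, so all these elements lie in $\Z[\ze_n]$. First I would dispose of the degenerate factors: $\ze_n^d-1=0$ occurs exactly when $n\mid d$, which forces $n\mid m$, so this can only happen when $m/n$ is a positive integer. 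I would treat that case separately and otherwise assume $n\nmid m$, so that every factor in the product is nonzero.

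The single fact doing the real work is the following: for $e>1$ the number $1-\ze_e$ is a unit in $\Z[\ze_e]$ (hence in $\Z[\ze_n]$) unless $e$ is a prime power $p^k$, in which case it is a non-unit of norm $p$. More precisely $\prod_{\gcd(j,e)=1}(1-\ze_e^j)=\Phi_e(1)$, which by Lemma~\ref{lem:cyclo} equals $p$ when $e=p^k$ and equals $1$ when $e$ has at least two distinct prime divisors, while for fixed $e$ all the $1-\ze_e^j$ with $\gcd(j,e)=1$ are associates. Consequently, grouping the factors $(\ze_n^d-1)^{\mu(m/d)}$ according to the order $e=n/\gcd(n,d)$ of $\ze_n^d$, every $e$ that is not a prime power contributes only to the unit $u$, and the genuine contribution comes solely from the $e=p^k$. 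For such $e$ the remaining task is purely combinatorial: compute the total exponent $\sum\mu(m/d)$ over the $d\mid m$ with $\gcd(n,d)=n/p^k$, and show that it vanishes unless $m$ and $n$ differ by a single prime-power factor, landing exactly in the listed cases.

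I expect the bookkeeping of these exponents to be the main obstacle, and in particular the fine split in the case $m/n=p^{-\al}$, where the answer is $(1-\ze_{p^\al})u$ when $p\nmid m$ but $(1-\ze_{p^{\al+1}})^{p-1}u$ when $p\mid m$. Rather than evaluate the M\"obius sums by brute force, I would run an induction using the recursion of Lemma~\ref{lem:cyclo}, which is tailor-made for this. When a prime $p$ divides $n$ one has $\ze_n^p=\ze_{n/p}$, and writing $m=pm'$ the relation $\Phi_m(x)=\Phi_{m'}(x^p)$ (valid when $p\mid m'$) gives $\Phi_m(\ze_n)=\Phi_{m'}(\ze_{n/p})$, reducing $(m,n)$ to $(m/p,n/p)$ while preserving the ratio $m/n$; the case $p\mid m$, $p\nmid m'$ instead uses $\Phi_m(x)=\Phi_{m'}(x^p)/\Phi_{m'}(x)$, and this quotient is precisely what produces the extra factor $(1-\ze_{p^{\al+1}})^{p-1}$. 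Peeling primes in this way reduces every instance to base cases $\Phi_{p^k}(\ze_n)$ with $n$ either coprime to $p$ or a power of $p$, which are evaluated directly from the explicit formula $\Phi_{p^k}(x)=1+x^{p^{k-1}}+\dots+x^{(p-1)p^{k-1}}$. As a running sanity check, the norm $N_{\Q(\ze_n)/\Q}(\Phi_m(\ze_n))$ equals the resultant of $\Phi_m$ and $\Phi_n$, known to be $p^{\varphi(n)}$ exactly when $m/n$ is a prime power and $1$ otherwise; matching norms against the claimed expressions both guides and confirms the case analysis.
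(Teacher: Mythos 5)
The paper does not prove this lemma at all: it is imported verbatim as Proposition~1 of Kurshan and Odlyzko~\cite{odlyzko}, so there is no in-paper argument to compare yours against. Judged on its own, your plan follows the standard route to this result (and, in substance, the route of the cited source): M\"obius-invert to get $\Phi_m(\ze_n)=\prod_{d\mid m}(\ze_n^d-1)^{\mu(m/d)}$, replace each $\ze_n^d-1$ by an associate of $1-\ze_e$ with $e=n/\gcd(n,d)$, and invoke the key arithmetic fact that $1-\ze_e$ is a unit unless $e$ is a prime power $p^k$, in which case $\prod_{\gcd(j,e)=1}(1-\ze_e^j)=\Phi_e(1)=p$. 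That fact, your proof of it, the observation that the $1-\ze_e^j$ are mutually associate, and the norm/resultant sanity check are all correct; and the proposed induction through Lemma~\ref{lem:cyclo} is a sound way to organize the exponent bookkeeping --- the quotient case $\Phi_m(x)=\Phi_{m'}(x^p)/\Phi_{m'}(x)$ really is what manufactures the factor $(1-\ze_{p^{\al+1}})^{p-1}$, as one verifies already on $\Phi_p(\ze_{p^{\al+1}})=(\ze_{p^{\al}}-1)/(\ze_{p^{\al+1}}-1)$ up to a unit.

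Two pieces are still owed before this is a proof rather than a strategy. First, the case $n\mid m$, which you set aside, contains the entire first line of the lemma (the $pu$ case) and is not harmless: the factors $\ze_n^{d}-1$ with $n\mid d$ vanish individually, and the product only makes sense because their total exponent $\sum_{e\mid m/n}\mu\bigl((m/n)/e\bigr)$ is $0$ for $m\neq n$. The standard repair is to write $x^{ne}-1=(x^n-1)\bigl(1+x^n+\cdots+x^{n(e-1)}\bigr)$, cancel the $(x^n-1)$ factors using that zero exponent sum, and note that the surviving contribution $\prod_{e\mid m/n}e^{\mu((m/n)/e)}$ equals $p$ exactly when $m/n=p^{\al}$ and $1$ otherwise (the von Mangoldt identity); you still must check the non-degenerate factors contribute a unit here. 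Second, the M\"obius exponent sums $\sum\mu(m/d)$ over $d\mid m$ with $\gcd(n,d)=n/p^k$, which you yourself flag as the main obstacle, are precisely where the split between $p\nmid m$ and $p\mid m$ in the $m/n=p^{-\al}$ case is decided, and they are only gestured at. Neither omission is a wrong turn --- both computations are routine and your framework accommodates them --- but as written the proposal is a correct and well-chosen plan, not a complete argument.
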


\begin{theorem}\label{thm:cnr}
Let $n$ and $r$ be positive integers such that the circulant graph
$C(2n,r)$ is well defined. Then the circulant graph $C(2n,r)$ is
singular if and only if  $\gcd(n,2r+1)\ge 3$.
\end{theorem}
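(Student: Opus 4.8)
The plan is to read off the representer polynomial of $C(2n,r)$ and feed it into Lemma~\ref{lem:Ga}. Since $A(C(2n,r))=A(C_{2n}^{(r)})+A(C_{2n}^n)$ and $2n$ is even (so $\ga_{C_{2n}^n}(x)=x^n$), the representer polynomial is
\[ \ga_A(x) = \sum_{j=1}^{r}\bigl(x^{j}+x^{2n-j}\bigr)+x^{n}, \]
a palindromic polynomial whose lowest term is $x$. By Lemma~\ref{lem:singular}, $A$ is singular if and only if $\ga_A(\omega)=0$ for some $2n$-th root of unity $\omega\neq1$; using $\omega^{2n-j}=\omega^{-j}$ I would record
\[ \ga_A(\omega)=\Bigl(\sum_{j=-r}^{r}\omega^{j}\Bigr)-1+\omega^{n}. \]
The decisive split is by the value of $\omega^{n}$: from $x^{2n}-1=(x^{n}-1)(x^{n}+1)$, the roots with $\omega^{n}=1$ are exactly the $n$-th roots of unity, of order $d\mid n$, while those with $\omega^{n}=-1$ are the roots of $x^{n}+1$, of order $d\mid 2n$ with $d\nmid n$ (which forces $d$ even).

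First I would dispose of the roots with $\omega^{n}=1$, where $\ga_A(\omega)=\sum_{j=-r}^{r}\omega^{j}=\omega^{-r}\frac{\omega^{2r+1}-1}{\omega-1}$. As $\omega\neq1$, this vanishes iff $\omega^{2r+1}=1$, and a root of order $d\mid n$, $d\neq1$ with this property exists iff $d\mid\gcd(n,2r+1)$ for some $d>1$, i.e. iff $\gcd(n,2r+1)>1$. Because $2r+1$ is odd, every common divisor of $n$ and $2r+1$ is odd, so $\gcd(n,2r+1)>1$ is the same as $\gcd(n,2r+1)\ge 3$. This settles the $n$-th root contribution and already gives the easy direction of the equivalence.

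The hard part is the roots with $\omega^{n}=-1$: I must show these produce a zero only when $\gcd(n,2r+1)>1$ already, so that they create no new singularity. Here $\ga_A(\omega)=\sum_{j=-r}^{r}\omega^{j}-2$, and clearing denominators turns $\ga_A(\omega)=0$ into
\[ \frac{\omega^{2r+1}-1}{\omega-1}=2\,\omega^{r}. \]
Let $d$ be the (even) order of $\omega$ and set $e=d/\gcd(2r+1,d)$. I would apply the norm $N_{\Q(\ze_d)/\Q}$ to both sides and evaluate the cyclotomic quantities with the help of Lemma~\ref{lem:odlyzko} (and the standard fact that $\Phi_m(1)=p$ when $m$ is a power of a prime $p$ and $\Phi_m(1)=1$ otherwise): $|N(\omega-1)|=\Phi_d(1)$, $|N(\omega^{2r+1}-1)|=\Phi_e(1)^{\varphi(d)/\varphi(e)}$, and $|N(2\,\omega^{r})|=2^{\varphi(d)}$. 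Comparing the exponents of $2$ in the resulting equality $\Phi_e(1)^{\varphi(d)/\varphi(e)}=2^{\varphi(d)}\Phi_d(1)$ rules out $d$ being a power of $2$ and forces both $e=2$ and $d$ to have an odd prime factor. Writing $d=2m$ with $m$ odd then gives $\gcd(2r+1,d)=m$, so $m\mid 2r+1$ with $m>1$, while $d\mid 2n$ gives $m\mid n$; hence $m\mid\gcd(n,2r+1)$ with $m>1$, i.e. $\gcd(n,2r+1)\ge 3$.

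Putting the two cases together, $\ga_A$ vanishes at some $2n$-th root of unity $\neq1$ if and only if $\gcd(n,2r+1)\ge 3$, which by Lemma~\ref{lem:Ga} is the claimed criterion. The main obstacle is precisely the second case: roots of $x^{n}+1$ genuinely can annihilate $\ga_A$ (for instance $r=1$ with $\omega$ a primitive sixth root of unity), so they cannot be dismissed, and the content is the norm computation showing such an $\omega$ can occur only when $\gcd(n,2r+1)>1$. The delicate bookkeeping lies in tracking the $2$-adic valuations and the prime-power structure of the values $\Phi_m(1)$ so as to pin down $e=2$.
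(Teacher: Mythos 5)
Your proof is correct, and although it shares the paper's skeleton---write down the representer polynomial, evaluate at $2n$-th roots of unity $\omega\ne 1$, split according to $\omega^{n}=\pm1$, and reduce the hard case $\omega^{n}=-1$ to the impossibility of $\frac{\omega^{2r+1}-1}{\omega-1}=2\omega^{r}$---the way you settle that hard case is genuinely different. The paper assumes $\gcd(n,2r+1)=1$ there, invokes Lemma~\ref{lem:odlyzko} to conclude that $\frac{\omega^{2r+1}-1}{\omega-1}=\prod_{\ell\mid 2r+1,\,\ell\ne1}\Phi_{\ell}(\omega)$ is a unit of $\Z[\ze_d]$, and then asserts this unit has complex absolute value $1$, hence cannot equal $2\omega^{r}$. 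You instead apply $N_{\Q(\ze_d)/\Q}$ to $\omega^{2r+1}-1=2\omega^{r}(\omega-1)$ to get $\Phi_e(1)^{\varphi(d)/\varphi(e)}=2^{\varphi(d)}\Phi_d(1)$ with $e=d/\gcd(2r+1,d)$, and a count of powers of $2$ forces $e=2$ and $\Phi_d(1)=1$, hence an odd $m>1$ dividing both $n$ and $2r+1$. Your route buys two things. First, it is the more robust argument: a unit of $\Z[\ze_d]$ need not lie on the unit circle (e.g.\ $1+\ze_5$), so the paper's step ``unit, that is, absolute value $1$'' is not valid as literally stated; what is true, and what your norm computation makes precise, is that a unit has norm $\pm1$ while $2\omega^{r}$ has norm $\pm2^{\varphi(d)}$. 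Second, you do not assume $\gcd(n,2r+1)=1$ in the hard case but extract the divisor $m\ge3$ directly from any hypothetical zero, so both directions of the equivalence come out of one computation, and your example ($r=1$, $\omega$ a primitive sixth root of unity) correctly shows that zeros from $x^{n}+1$ really occur and must be accounted for rather than dismissed. In a final write-up you should record the two small justifications you elide: $e\ne1$ because $d$ is even while $2r+1$ is odd, and $\gcd(2r+1,d)=d/2$ forces $d=2m$ with $m$ odd because $\gcd(2r+1,d)$ is odd.
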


\begin{proof}
Let $A$ be the adjacency matrix of the graph $C(2n,r)$. Then observe that the
first row of $A$ equals
$[0, \underbrace{1, 1, \ldots, 1}_r, \underbrace{0, 0, \ldots,
0}_{n-r-1}, 1, \underbrace{0, 0, \ldots, 0}_{n-r-1},
\underbrace{1, 1, \ldots, 1}_r]$.
 Consequently, $$\ga_A(x)=x+x^2+\dots +x^r+x^n+x^{2n-r}+\dots +x^{2n-1}= x
\Ga_A(x)$$ and
\begin{eqnarray*}
(x-1) \Ga_A(x)&=& x^r- 1 + x^{n-1}(x-1) + x^{2n-r-1}(x^r - 1)\\
&=& x^r (1 - x^{2n - 2r - 1}) + (x^{n} -1 ) - (x^{n-1} - x^{2n-1})
\\  &=& (x^n-1)(x^{n-1}+1)-x^r(x^{2n-2r-1}-1).
\end{eqnarray*}
Now, let us assume that $\gcd(n,2r+1) = d \ge 3$. Then  $(\ze_{2n}^{ 2 n/d} -
1) \Ga_A(\ze_{2n}^{2
n/d}) = 0$ as $$\left(\ze_{2n}^{2 n/d}\right)^{n}=
\left(\ze_{2n}^{2 n}\right)^{n/d} = 1 =  \left(\ze_{2n}^{2
n}\right)^{(2 r + 1)/d} = \left(\ze_{2n}^{2 n/d}\right)^{2 r + 1}=
\left(\ze_{2n}^{2 n/d}\right)^{2n - 2 r - 1}.$$ Hence, the circulant graph $C(2n,r)$ is singular.

Conversely, let us assume that the graph $C(2n,r)$ is singular. This
implies that there exists an eigenvalue of $C(2n,r)$ that equals
zero. That is, there exists a $k, \; 1 \le k \le 2n - 1$, such that
$\ga_A(\ze_{2n}^k) = 0$. We will now show that if
$\gcd(n, 2r + 1) = 1$ then the expression $(x-1)\Ga_A(x)$ evaluated
at $x = \ze_{2n}^k$ can never equal zero, for any $k, \; 1 \le k
\le 2n-1$, and this will complete the proof of the result.

We need to consider two cases depending on whether $k$ is odd or $k$
is even. Let $k$ be even, say $k = 2m$, for some  $m, \; 1 \le m < n$. Then
evaluating $(x-1) \Ga_A(x)$ at $x = \ze_{2n}^{2m}$ and using
$\gcd(n, 2r + 1) = 1$ leads to
\begin{eqnarray*}
 && \hspace{-1in} \left[ \left(\ze_{2n}^{2m} \right)^n - 1\right]
\left[ \left(\ze_{2n}^{2m} \right)^{(n-1)} + 1\right] -
\left(\ze_{2n}^{2m} \right)^r \left[ \left(\ze_{2n}^{2m}
\right)^{2n- 2r - 1} - 1\right] \\ &=& - \left(\ze_{2n}^{2m}
\right)^r \left[ \left(\ze_{2n}^{2m} \right)^{-( 2r + 1)} -
1\right] \ne 0.
\end{eqnarray*}
Now, let $k$ be odd, say $k = 2m + 1$, for some $m, \; 0 \le m \le n-1$. Then
evaluating $(x-1) \Ga_A(x)$ at $x = \ze_{2n}^{2m+1}$ leads to
\begin{eqnarray}
 && \hspace{-1in} \left[ \left(\ze_{2n}^{2m+1} \right)^n - 1\right]
\left[ \left(\ze_{2n}^{2m+1} \right)^{(n-1)} + 1\right] -
\left(\ze_{2n}^{2m+1} \right)^r \left[ \left(\ze_{2n}^{2m+1}
\right)^{2n- 2r - 1} - 1\right] \nonumber \\
&=& - 2 \left[ - \ze_{2n}^{-(2m+1)} + 1  \right] -
\ze_{2n}^{-(2m+1) (r+1)} \left[ 1 -
\ze_{2n}^{(2m+1) (2r + 1)} \right] \nonumber \\
&=& -\frac{ \ze_{2n}^{2m+1} - 1}{\ze_{2n}^{(2m+1)(r+1)}} \left[ -
2 \ze_{2n}^{(2m+1) r } + \frac{ \ze_{2n}^{(2m+1)(2r + 1)} - 1}{
\ze_{2n}^{(2m+1)} - 1} \right] \nonumber \\
&=& -\frac{ \ze_{2n}^{2m+1} - 1}{\ze_{2n}^{(2m+1)(r+1)}} \left[ -
2 \ze_{2n}^{(2m+1) r } + \prod_{\ell \mid (2r+1), \ell \ne 1}
\Phi_{\ell}(\ze_{2n}^{2m+1}) \right] \label{eqn:odd}
\end{eqnarray}
Note that, $\ze_{2n}^{2m+1}$ is a $d$-th primitive root of unity, for some $d$
dividing $2n$. As $\gcd(2 r + 1, 2n) = 1$, $\gcd(2 r + 1, d) = 1$.
Thus,  using Lemma~\ref{lem:odlyzko}, we get
$\prod\limits_{\ell \mid (2r+1), \ell \ne 1}
\Phi_{\ell}(\ze_{2n}^{2m+1})$ is a unit in $\Z[\ze_{d}]$. That
is, $\left|\prod\limits_{\ell \mid (2r+1), \ell \ne 1}
\Phi_{\ell}(\ze_{2n}^{2m+1})\right| = 1$.
 Hence, in Equation~(\ref{eqn:odd}), the
term in the parenthesis cannot be zero. Thus, we have proved the
result for the odd case as well.

Thus, the proof of the result is complete.
\end{proof}

\begin{rem}
We would like to mention here that the necessary part of Theorem~\ref{thm:cnr}
was stated and proved by Ruivivar~(see Theorem~2.1 in~\cite{leo}).
\end{rem}

We will now try to understand the complement graph $C(2n,r)^c$ of $C(2n,r)$.
\begin{lemma}\label{lem:cnrc}
Let $n$ and $r$ be positive integers such that the circulant graph
$C(2n,r)$ is well defined. Then $C(2n,r)^c$ is non-singular if and only if the
following conditions hold:
\begin{enumerate}
\item $n$ and $r$ have the same parity,
\item $\gcd(n, r+1) = 1$, and
\item the highest power of $2$ dividing $n$ is strictly smaller than the highest power of $2$ dividing $n-r$.
    \end{enumerate}
\end{lemma}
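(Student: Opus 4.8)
The plan is to mirror the strategy used for Lemma~\ref{lem:cnic}: pass to the representer polynomial of the complement, strip off the leading power of $x$, and reduce everything to a root-counting problem over the $2n$-th roots of unity. Let $A$ denote the adjacency matrix of $C(2n,r)^c$, so that $A = \J - B - I$ where $B$ is the adjacency matrix of $C(2n,r)$. Using $\ga_B(x)=x+x^2+\cdots+x^r+x^n+x^{2n-r}+\cdots+x^{2n-1}$ from the proof of Theorem~\ref{thm:cnr}, I would compute
$$\ga_A(x)=\frac{x^{2n}-1}{x-1}-1-\ga_B(x)=\sum_{j=r+1}^{n-1}x^j+\sum_{j=n+1}^{2n-r-1}x^j.$$
Writing $m=n-r$ and factoring out the lowest power $x^{r+1}$, this gives $\ga_A(x)=x^{r+1}\Ga_A(x)$ with
$$\Ga_A(x)=(1+x^m)\,(1+x+\cdots+x^{m-2})=(1+x^m)\,\frac{x^{m-1}-1}{x-1}.$$
The boundary value $r=n-1$, i.e. $m=1$, is degenerate: there $C(2n,r)$ is the complete graph, its complement is the empty graph (hence singular), and condition~(1) already fails; so I would dispose of it separately and assume $m\ge 2$. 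By Remark~\ref{rem:Wn} together with Lemma~\ref{lem:singular}, $A$ is non-singular if and only if no $2n$-th root of unity annihilates $\Ga_A$, and since $\Ga_A(1)=2(m-1)\neq 0$ the value $x=1$ may be included harmlessly.

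Because $\Ga_A$ splits into the factors $1+x^m$ and $1+x+\cdots+x^{m-2}$, a $2n$-th root of unity kills $\Ga_A$ exactly when it kills one of these factors, so I would analyse the two factors independently; non-singularity is then equivalent to neither factor having a $2n$-th root of unity among its roots. For the first factor, $(\ze_{2n}^k)^m=-1=\ze_{2n}^n$ is the congruence $km\equiv n\pmod{2n}$, which is solvable if and only if $\gcd(m,2n)\mid n$. Writing $v_2(\cdot)$ for the $2$-adic valuation and setting $d=\gcd(m,2n)$, one has $v_2(d)=\min(v_2(m),v_2(n)+1)$ since $v_2(2n)=v_2(n)+1$; as $d\mid 2n$ always, $d\mid n$ holds exactly when $v_2(d)\le v_2(n)$, i.e. when $v_2(m)\le v_2(n)$. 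Hence the factor $1+x^m$ contributes no root of unity if and only if $v_2(n)<v_2(m)=v_2(n-r)$, which is precisely condition~(3).

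For the second factor, the roots of $1+x+\cdots+x^{m-2}=\frac{x^{m-1}-1}{x-1}$ are the $(m-1)$-th roots of unity other than $1$; one of them is a $2n$-th root of unity if and only if $x^{m-1}-1$ and $x^{2n}-1$ share a root different from $1$, i.e. if and only if $\gcd(m-1,2n)>1$. I would then translate $\gcd(m-1,2n)=\gcd(n-r-1,2n)=1$ into the stated conditions by splitting off the prime $2$: this gcd equals $1$ if and only if $n-r-1$ is odd, equivalently $n$ and $r$ have the same parity (condition~(1)), and $\gcd(n-r-1,n)=\gcd(r+1,n)=1$ (condition~(2)). Thus the factor $1+x+\cdots+x^{m-2}$ contributes no root of unity if and only if conditions~(1) and~(2) both hold.

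Combining the two analyses, $C(2n,r)^c$ is non-singular if and only if conditions~(1), (2) and~(3) hold simultaneously, which is the assertion. I expect the main obstacle to be the $2$-adic bookkeeping for the first factor: one must argue carefully that the only way $\gcd(m,2n)$ can fail to divide $n$ is by absorbing the extra factor of $2$ present in $2n$ but not in $n$, which is exactly the condition $v_2(m)>v_2(n)$. The parity and coprimality translation for the second factor, and the handling of the degenerate case $m=1$, are routine by comparison.
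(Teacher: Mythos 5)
Your proof is correct and follows essentially the same route as the paper: both factor the reduced representer polynomial as $\Ga_A(x)=(1+x^{n-r})\frac{x^{n-r-1}-1}{x-1}$ and translate the vanishing of each factor at a $2n$-th root of unity into the conditions $\gcd(2n,n-r-1)=1$ (giving the parity and $\gcd(n,r+1)=1$ conditions) and $\gcd(2n,n-r)\nmid n$ (giving the $2$-adic condition). Your treatment of the two factors independently, together with the explicit handling of the degenerate case $r=n-1$, is if anything slightly more systematic than the paper's case analysis.
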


\begin{proof}
 Let $A$ be the adjacency matrix of $C(2n,r)^c$. Then $[\underbrace{0,0,\ldots,
0}_{r+1}\;\underbrace{1,1,\ldots, 1}_{n-r-1}\;0\;\underbrace{1,1,\ldots,
1}_{n-r-1}\;\underbrace{0,0,\ldots, 0}_r]$ is the first
 row of $A$. Note that $$\Ga_A(x) =  (1 + x^{n-r}) \frac{x^{n-r-1} - 1}{x - 1}.$$

Now, let us assume that the graph $C(2n, r)^c$ is non-singular. This means that
$\Ga_A(\ze_{2n}^{k}) \ne 0$, for any $k = 1, 2, \ldots, 2n-1$.

Note that if $n$ and $r$ have opposite parity then $\gcd(2n, n-r - 1) = d \ge 2$
and hence $\Ga_A(\ze_{2n}^{2n/d}) = 0$. Also, if $n$ and $r$ have the same
parity and $\gcd(n,r+1) = d > 2$ then  $n - r - 1$ is odd  and  $\gcd(2n, n-r -
1) = \gcd(n,n-r-1) = \gcd(n,r+1) = d$. Hence, in this case again,
$\Ga_A(\ze_{2n}^{2n/d}) = 0$.

Now, the only case that we need to check is the following: \\
$n$ and $r$ have the same parity, $\gcd(n, r+1) = 1$ and the highest power of
$2$ dividing $n$ is greater than or equal to the highest power of $2$ dividing
$n-r$.

As $n$ and $r$ have the same parity and $\gcd(n,r+1) = 1$, we get $\gcd(2n,
n-r-1) = 1$ and thus $$(\ze_{2n}^k)^{n-r-1} - 1 \ne 0, \; {\mbox{ for any }} \;
k = 1, 2, \ldots, 2n-1.$$
Thus, we need to check for the condition on $k$ so that $1 + (\ze_{2n}^k)^{n-r}
= 0.$ This is true if and only if $\gcd(2n, n-r) \mid n$, or equivalently, the
highest power of $2$ dividing $n$ is greater than or equal to the highest power
of $2$ dividing $n-r$.

Thus, we have the required result.
\end{proof}

\begin{rem}
Observe that using Lemma~\ref{lem:cnrc}, the graph $C(2n, r)^c$ is non-singular,
whenever $n$ and $r$ are both odd and $\gcd(n, r+1) = 1$. Such numbers can be
easily computed. For example, a class of such graphs can be obtained by choosing
two positive integers $s$ and $t$ with $s > t$ and defining $n = 2^s - 2^t + 1$
and $r = 2^t - 1$.
\end{rem}

\section{Generalizations}\label{sec3:3}
In this section, we look at a few classes of graphs/digraphs, which are
generalizations of the graphs that appear in Section~\ref{sec3:2}. We first
start with a class of circulant digraphs.

Consider a circulant matrix $A$ whose first row
contains $r$  and $s$ consecutive $1$'s separated by $t$ consecutive
$0$'s, where each of $r, s$ and $t$ are non-negative integers. That is, the vector $[\underbrace{1, 1,
\ldots, 1}_r, \; \underbrace{0, 0, \ldots, 0}_t, \;\underbrace{1, 1,
\ldots, 1}_s,\; \underbrace{0, 0, \ldots, 0}_{n-(r+t+s)}]$ is the first row of  $A$. If $s=0$, then it is an $r$-element circulant digraph studied in Lemma~\ref{lem:k}.
These circulant digraphs will be called an $(r,s,t)$-element circulant digraph.
The next result
gives a few conditions under which the $(r,s,t)$-element circulant
digraph is singular.

\begin{lemma}\label{lem:rst}
Let $X$ be an $(r,s,t)$-element circulant digraph on $n$ vertices.
 Then the graph $X$ is singular if
\begin{enumerate}
\item \label{lem:rst:1} $\gcd(n,s,r) > 1$, or
\item \label{lem:rst:2} $\gcd(n,s)= 1$ and one of the following condition holds:
\begin{enumerate}
\item \label{lem:rst:2.1} there exists  $d \ge 2$ such that $d \mid t$ and
$s=\ell r$, for some positive integer $\ell \equiv -1 \pmod d$.
\item \label{lem:rst:2.2} $n$ is even, there exists an even integer $d$ such that $(r + t)$ is an odd multiple of $\frac{d}{2}$ and $s=\ell r$, for some positive integer $\ell \equiv 1 \pmod d$.
\end{enumerate}
\end{enumerate}
\end{lemma}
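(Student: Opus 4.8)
The plan is to reduce the statement to Lemma~\ref{lem:singular} and then, in each listed case, simply to exhibit a nontrivial $n$-th root of unity that is a root of the representer polynomial; since the lemma only asserts sufficiency, I never need to prove non-singularity elsewhere. Reading off the first row, the representer polynomial is
\[
\ga_A(x)=\frac{x^r-1}{x-1}+x^{r+t}\,\frac{x^s-1}{x-1},
\]
a genuine polynomial with constant term $1$, so $\Ga_A(x)=\ga_A(x)$. Since $\ga_A(1)=r+s>0$, the value $x=1$ is never a zero, and so by Lemma~\ref{lem:singular} the digraph $X$ is singular as soon as $\ga_A(\omega)=0$ for some $n$-th root of unity $\omega\ne 1$. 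For such $\omega$ the denominator $\omega-1$ is nonzero, so this is equivalent to the single identity
\[
(\omega^r-1)+\omega^{r+t}(\omega^s-1)=0,
\]
which I will call $(\star)$. Every case below is handled by choosing a divisor $d\ge 2$ of $n$ and testing $(\star)$ at the primitive $d$-th root of unity $\omega=\ze_n^{\,n/d}$ (which has order exactly $d$ and is $\ne 1$).

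For part~(\ref{lem:rst:1}) I take $d=\gcd(n,s,r)$, which is $>1$ and divides $n$ by definition. Then $d\mid r$ and $d\mid s$ give $\omega^r=\omega^s=1$, so both parentheses in $(\star)$ vanish and $X$ is singular.

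For part~(\ref{lem:rst:2}) the guiding idea is to choose $d$ so that the prefactor $\omega^{r+t}$ collapses $(\star)$ to a single binomial. In case~(\ref{lem:rst:2.1}), $d\mid t$ forces $\omega^{t}=1$, hence $\omega^{r+t}=\omega^{r}$, and $(\star)$ collapses to $\omega^{r+s}=1$; writing $s=\ell r$ with $\ell\equiv-1\pmod d$ gives $r+s=r(1+\ell)$ with $d\mid(1+\ell)$, so indeed $\omega^{r+s}=1$. In case~(\ref{lem:rst:2.2}), $d$ is even, so $\omega^{d/2}=-1$; since $r+t$ is an odd multiple of $d/2$, raising $-1$ to that odd power yields $\omega^{r+t}=-1$, and then $(\star)$ collapses to $\omega^{r}=\omega^{s}$; writing $s=\ell r$ with $\ell\equiv 1\pmod d$ gives $d\mid(s-r)$, so $\omega^{r}=\omega^{s}$. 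In each subcase $(\star)$ holds at $\omega\ne 1$, so $X$ is singular.

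The step that demands genuine care, rather than routine computation, is guaranteeing that the chosen $d$ divides $n$: only then is $\ze_n^{\,n/d}$ an actual root of $x^n-1$, hence a legitimate eigenvalue argument in Lemma~\ref{lem:singular}. In part~(\ref{lem:rst:1}) this is automatic from $d=\gcd(n,s,r)$; in part~(\ref{lem:rst:2}) it must be carried by the hypotheses, so I would state each subcase with the standing requirement $d\mid n$ (in case~(\ref{lem:rst:2.2}) the assumption that $n$ is even is exactly what permits an even $d\mid n$). This is not cosmetic: a $d$ dividing $t$ or $r\pm s$ but coprime to $n$ produces no zero eigenvalue, since no primitive $d$-th root of unity then occurs among the $\ze_n^k$. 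If only a proper divisor of the given $d$ divides $n$, the natural fix is to pass from $d$ to $\gcd(d,n)$ and check it is still $\ge 2$ and still inherits $d\mid t$ (respectively $d$ even with $r+t$ an odd multiple of $d/2$) and $d\mid(r\pm s)$. With that divisibility secured, the three short evaluations above complete the proof.
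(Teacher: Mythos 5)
Your proof is correct and follows essentially the same route as the paper: in each case one evaluates $(x-1)\ga_A(x)=(x^r-1)+x^{r+t}(x^s-1)$ at $\omega=\ze_n^{\,n/d}$ and checks that it vanishes, and your collapsed identities $\omega^{r+s}=1$ (case~\ref{lem:rst:2}.\ref{lem:rst:2.1}) and $\omega^{r}=\omega^{s}$ (case~\ref{lem:rst:2}.\ref{lem:rst:2.2}) are exactly what the paper's manipulation produces, while avoiding its division by $\ze_n^{rn/d}-1$, which can be zero when $d\mid r$. Your insistence on $d\mid n$ is also well founded rather than cosmetic: the paper's statement omits this hypothesis but its proof, like yours, needs it to make sense of $\ze_n^{\,n/d}$, and without it the claim fails (for instance $n=7$, $r=1$, $t=3$, $s=2$, $d=3$, $\ell=2$ satisfies every stated condition of case~\ref{lem:rst:2}.\ref{lem:rst:2.1}, yet $\ga_A(x)=1+x^4+x^5$ is coprime to $x^7-1$), so the extra requirement $d\mid n$ should indeed be read into the statement as you propose.
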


\begin{proof}
{\bf Proof of Part~\ref{lem:rst:1}:} Observe that the  representer polynomial of the $(r,s,t)$-element  circulant digraph is given by
\begin{eqnarray*}
\ga_A(x)&=& 1+x+\dots +x^{r-1}+x^{r+t}+\dots +x^{r+s+t-1}\\
&=& \frac{x^r-1}{x-1}+x^{r+t}\frac{x^s-1}{x-1}.
\end{eqnarray*}
Or equivalently, \begin{equation}\label{eqn:pax} (x-1) \ga_A(x) = (x^r
- 1) + x^{r+t} (x^s - 1).
\end{equation} Let $\gcd(n, r, s) = k>1$. Then it can be easily checked that
$\ze_n^{n/k}$ is a root of Equation~(\ref{eqn:pax}). Thus, $X$ is
singular. This completes the proof of the first part.

{\bf Proof of Part~\ref{lem:rst:2}.\ref{lem:rst:2.1}:} Let us assume that $\gcd(n,s) = 1$.
Also, let us assume that there exists a positive integer $d \ge 2$ such that $d
\mid t$ and $s=\ell r$, for some positive integer $\ell \equiv -1 \pmod d$. So,
there exists $\be \in \Z$ such that $ \ell = \be d - 1$.
In this case, using Equation~(\ref{eqn:pax}), we get
\begin{eqnarray*} (\ze_n^{(n/d)} - 1) \ga_A(\ze_n^{(n/d)}) &=& (\ze_n^{(r
n/d)} - 1) \left( 1 + \ze_n^{(r+t) n/d}
\frac{\ze_n^{\ell (r n/d)} - 1}{\ze_n^{(r n/d)} - 1} \right) \\
&=& (\ze_n^{r n/d} - 1) \left( 1 + \ze_n^{(r+t) n/d}
\frac{\ze_n^{-(r n/d)} - 1}{\ze_n^{(r n/d)} - 1} \right) \\
& = & (\ze_n^{r n/d} - 1) \left( 1 - \ze_n^{(t n/d)} \right).
\end{eqnarray*}
As $d \mid t$,  $ \; \ga_A(\ze_n^{n/d}) = 0$. That is, we get the required
result in this case as well.

{\bf Proof of  Part~\ref{lem:rst:2}.\ref{lem:rst:2.2}:} Let us assume that
$\gcd(n,s) = 1$, $n = 2m$. Also, let us assume that there exists an even
positive integer $d $ such that $r+t$ is an odd multiple of $\frac{d}{2}$  and
$s=\ell r$, for some positive integer $\ell \equiv 1 \pmod d$. Then there exists
$\be \in \Z$ such that $ \ell = \be d + 1$. In this case, using
Equation~(\ref{eqn:pax}), we get
\begin{eqnarray*} (\ze_n^{(n/d)} - 1) \ga_A(\ze_n^{(n/d)}) &=& (\ze_n^{(r
n/d)} - 1) \left( 1 + \ze_n^{(r+t) n/d}
\frac{\ze_n^{\ell (r n/d)} - 1}{\ze_n^{(r n/d)} - 1} \right) \\
&=& (\ze_n^{r n/d} - 1) \left( 1 + \ze_n^{(r+t) n/d}
\frac{\ze_n^{(r n/d)} - 1}{\ze_n^{(r n/d)} - 1} \right) \\
& = & (\ze_n^{r n/d} - 1) \left( 1 + \ze_n^{(r+t) n/d)} \right).
\end{eqnarray*}
Thus, under the given conditions, the corresponding digraph $X$ is
singular.

Hence, the proof of the lemma is complete.
\end{proof}

Thus, the above result gives conditions under which the generalized $(r,s,t)$-
digraphs, for non-negative values of $r, s$ and $t$, are singular. We will now
define another class of circulant digraphs and obtain
conditions under which the circulant digraphs are singular. These graphs are
also a generalization of the graphs studied in  Lemma~\ref{lem:k}.

Let $i, j, k$ and $\ell$ be non-negative integers such that $j > \ell$ and $k j
+ i + \ell < n$. Consider a class of circulant digraphs, denoted
$C_n^{i,j,k,\ell}$, that has
$\ga_{A(C_n^{i,j,k,\ell})}(x)= \sum\limits_{t=0}^{k}\sum\limits_{s=i}^{i+\ell}
x^{s + tj}$  as its representer polynomial.
Then
\begin{eqnarray}
\ga_{A(C_n^{i,j,k,\ell})}(x)&=& x^i(1+x+\cdots+x^\ell)(1+x^j+x^{2j}+\cdots +
x^{kj})\nonumber\\
 &=& x^i
\frac{x^{\ell+1}- 1}{x-1} \; \cdot \; \frac{x^{(k+1)j } - 1}{x^j - 1}
\nonumber
\\ &=&x^i \prod_{s \mid \ell+1, s\ne 1} \Phi_s(x) \; \cdot \; \prod_{t \mid
(k+1)j, t\nmid j} \Phi_t(x).
\end{eqnarray}
Hence, we have the following theorem which we state without proof.

\begin{theorem}\label{thm:gen}
Let $i, j, k$ and $\ell$ be non-negative integers with $j > \ell$ and $k j
+ i + \ell < n$. Then the
circulant digraph $C_n^{i,j,k,\ell}$, defined above,  is
singular if and only if either $\gcd(\ell + 1,n) \ge 2$ or
 $\gcd(k+1, \frac{n}{\gcd(n,j)}) \ge 2$.
\end{theorem}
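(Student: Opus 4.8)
The plan is to read off singularity directly from the roots of the polynomial $\Ga_A(x) = F_1(x)F_2(x)$, where $F_1(x) = \frac{x^{\ell+1}-1}{x-1}$ and $F_2(x) = \frac{x^{(k+1)j}-1}{x^j-1}$; this is exactly $\ga_{A(C_n^{i,j,k,\ell})}(x)$ with the leading factor $x^i$ stripped off, in the notation of Lemma~\ref{lem:Ga}. By Remark~\ref{rem:Wn} and Lemma~\ref{lem:singular}, removing $x^i$ does not affect singularity, so $C_n^{i,j,k,\ell}$ is singular if and only if $\Ga_A$ and $x^n-1$ share a root, i.e. if and only if $\Ga_A(\ze_n^m) = 0$ for some $m$ with $1 \le m \le n-1$. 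Writing $\omega = \ze_n^m$ (so $\omega$ ranges over all $n$-th roots of unity other than $1$), the product $F_1(\omega)F_2(\omega)$ vanishes precisely when at least one factor does; hence the first step is to characterise separately when $F_1$, and when $F_2$, vanishes at some nontrivial $n$-th root of unity.

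For the first factor this is routine: since $\omega \ne 1$, we have $F_1(\omega) = 1 + \omega + \cdots + \omega^{\ell} = 0$ if and only if $\omega^{\ell+1} = 1$. A nontrivial $n$-th root of unity with $\omega^{\ell+1} = 1$ exists precisely when the group of $(\ell+1)$-torsion elements among the $n$-th roots is nontrivial, that is, when $\gcd(n,\ell+1) \ge 2$; an explicit witness is $\omega = \ze_n^{\,n/\gcd(n,\ell+1)}$. This produces the first alternative in the statement.

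The second factor carries the real content, and the main obstacle is that the ``denominator'' $x^j-1$ may itself vanish at $\omega$. I would treat this head-on. If $\omega^j = 1$, then evaluating $F_2$ in its polynomial form gives $F_2(\omega) = 1 + \omega^{j} + \omega^{2j} + \cdots + \omega^{kj} = k+1 \ne 0$, so no zero arises this way. If instead $\omega^j \ne 1$, then $F_2(\omega) = \frac{\omega^{(k+1)j}-1}{\omega^j-1} = 0$ if and only if $\omega^{(k+1)j} = 1$ while $\omega^j \ne 1$. The key reduction is the substitution $\eta = \omega^j$: as $\omega$ runs over the $n$-th roots of unity, the power map $\omega \mapsto \omega^j$ surjects onto the group of $N$-th roots of unity, where $N = n/\gcd(n,j)$. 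Hence an admissible $\omega$ exists if and only if there is an $N$-th root of unity $\eta$ with $\eta^{k+1} = 1$ and $\eta \ne 1$, which—exactly as for $F_1$—occurs if and only if $\gcd(k+1, N) = \gcd\!\left(k+1, \tfrac{n}{\gcd(n,j)}\right) \ge 2$. Note that $\eta \ne 1$ forces $\omega \ne 1$ automatically, so the witness is genuinely nontrivial.

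Combining the two cases, $C_n^{i,j,k,\ell}$ is singular if and only if $F_1$ or $F_2$ has a nontrivial $n$-th root of unity as a zero, i.e. if and only if $\gcd(\ell+1,n) \ge 2$ or $\gcd\!\left(k+1, \tfrac{n}{\gcd(n,j)}\right) \ge 2$, as claimed. I expect the only delicate points to be the surjectivity of the $j$-th power map onto the $N$-th roots of unity (a standard fact about cyclic groups, which one could alternatively package through Lemma~\ref{lem:Ga} by using $F_2 = \prod_{t \mid (k+1)j,\, t \nmid j} \Phi_t$ and asking which such indices $t$ divide $n$) together with the bookkeeping that shows the $\omega^j = 1$ contribution is harmless.
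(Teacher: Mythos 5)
Your proof is correct. Note that the paper states Theorem~\ref{thm:gen} explicitly \emph{without} proof, but the factorization it displays just beforehand, $\Ga_A(x)=\prod_{s\mid \ell+1,\,s\neq 1}\Phi_s(x)\cdot\prod_{t\mid (k+1)j,\,t\nmid j}\Phi_t(x)$, makes clear that the intended route is Lemma~\ref{lem:Ga}: determine which cyclotomic indices occurring in this product divide $n$. You instead evaluate the factors $F_1$ and $F_2$ at nontrivial $n$-th roots of unity, i.e.\ the Lemma~\ref{lem:singular} formulation of the same criterion. The two routes are equivalent, but yours handles the one genuinely delicate step more cleanly: translating ``there exists $t\ge 2$ with $t\mid n$, $t\mid (k+1)j$, $t\nmid j$'' into the stated condition $\gcd\left(k+1,\tfrac{n}{\gcd(n,j)}\right)\ge 2$ requires a small prime-power bookkeeping argument in the cyclotomic-index picture, whereas your substitution $\eta=\omega^j$, combined with the surjectivity of the $j$-th power map from the $n$-th roots of unity onto the $N$-th roots of unity with $N=n/\gcd(n,j)$, produces it in one line. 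You also correctly dispose of the removable singularity of $F_2$ at points with $\omega^j=1$, where the factor evaluates to $k+1\neq 0$; this is the detail most easily overlooked, and the hypotheses $j>\ell\ge 0$ guarantee $j\ge 1$ so that $N$ is well defined. In short: a correct and complete argument for a statement the paper leaves unproved, following the paper's factorization but a slightly different (and arguably smoother) closing step.
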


\begin{rem}
Note that we can vary the non-negative integers $i, j, k$ and $\ell$
to define quite a few class of circulant digraphs. For example, it
can be seen that the graphs $G(r,t)$ that are given by
Doob~\cite{doob} are a particular case of the above class. Also, it
can be easily verified that Theorem~\ref{thm:gen} is a
generalization of Lemma~\ref{lem:k}.
\end{rem}

\section*{Conclusion}
In the first section, we have obtained necessary and sufficient conditions for a few known classes of circulant graphs/digraphs to be singular.   We found these
necessary and sufficient conditions by using Lemma~\ref{lem:Ga}.
The graphs/digraphs that were  studied in Section~$2$ have been generalized to $(r,s,t)$-circulant digraphs for non-negative integers $r,s$ and $t$, and the circulant digraph
$C_n^{i,j,k,l}$, under certain restrictions. A necessary and sufficient condition
for the digraphs $C_n^{i,j,k,l}$ to be singular is also obtained. Some necessary
conditions are given under which the $(r,s,t)$-circulant digraphs  are singular.

 It will be nice to obtain necessary and sufficient conditions for the generalized $(r,s,t)$-digraphs to be singular.

\end{document}